\newtheorem{example}{Example}
\newtheorem{assumption}{Assumption}
\newtheorem{problem}{Problem}
\newtheorem{proposition}{Proposition}
\title{\LARGE \bf Multi-Objective LQG Design with Primal-Dual Method}
\author{Donghwan Lee and Do Wan Kim% <-this % stops a space
\thanks{D. Lee is with the Department of Electrical Engineering,
KAIST, Daejeon, 34141, South Korea {\tt\small
donghwan@kaist.ac.kr}.}
\thanks{D. Kim is with the Department of Electrical Engineering, Hanbat National University, Daejeon 34158, South Korea {\tt\small
dowankim@hanbat.ac.kr}.}
}
\begin{document}

\maketitle

%=============================================================================================================
\begin{abstract}
The goal of this paper is to study a multi-objective linear quadratic Gaussian (LQG) control problem.
In particular, we consider an optimal control problem minimizing a quadratic cost over a finite time horizon for linear stochastic systems subject to control energy constraints. To solve the problem, we suggest an efficient bisection line search algorithm which is computationally efficient compared to other approaches such as the semidefinite programming. The main idea is to use the Lagrangian function and Karush–Kuhn–Tucker (KKT) optimality conditions to solve the constrained optimization problem. The Lagrange multiplier is searched using the bisection line search. Numerical examples are given to demonstrate the effectiveness of the proposed methods.
\end{abstract}
\begin{IEEEkeywords}
Optimal control, linear quadratic Gaussian control, constraint, dynamic programming, Lagrangian
\end{IEEEkeywords}

\section{Introduction}
%===============================================================================

Optimal control of dynamical systems has long been one of the fundamental problems in control community~\cite{bertsekas1996neuro,bertsekas2005dynamic}.
Among various optimal control scenarios, the linear quadratic Gaussian (LQG) control problem is our main concern. It covers various applications such as mobile robots, industrial quality control system, and flight control to name just a few. The LQG problem can be efficiently solved using the classical dynamic programming and Riccati equation. In many applications, however, there exist several possibly competing objectives that require behaviors that mediates among them. Sometimes, those multiple objectives can be formulated as constraints, for instance, bounds on different objectives, risk measures or costs. In the classical dynamic programming, multiple objectives can be encoded into the cost to be minimized. However, since they are blended into a single cost, designing a policy, which satisfies constraints or minimizes multiple objectives, is challenging. In this respect, an optimization-based multi-objective LQG design provide more potentials than the traditional dynamic programming based approaches by leveraging the existing constrained optimization algorithms and theories~\cite{Boyd2004}.

Emergence of convex optimization~\cite{Boyd2004} and semidefinite programming (SDP)~\cite{vandenberghe1996semidefinite} techniques in control analysis and design promoted new optimization formulations of control problems~\cite{Boyd1994,el2000advances}. They also have provided greater convenience and flexibility in control design with various objectives and constraints. For instance, SDP formulations of control problems with various constraints have been developed in several papers~\cite{gattami2010generalized,kothare1996robust,Boyd1994,primbs2009stochastic,costa1999constrained} to name just a few. In particular,~\cite{gattami2010generalized} proposed a new SDP formulation, where the finite-horizon LQG problem was converted into the optimal covariance matrix selection problem, and addressed energy constrained LQG. However, when the size of the problem is large, the computational complexity of such SDP-based algorithms is known to explode quickly, and it makes the problem numerically inefficient.

Motivated by the above discussions, the main goal of this paper is to study a numerically efficient algorithm for LQG problems with an energy constraint via optimization theory and Lagrangian duality. The problem has many applications such as the building control where limited resources are allowed for the control task. To find an optimal solution to the problem, we suggest a simple and efficient bisection line search algorithm whose computational complexity is in general lower than SDP-based methods. The main idea is to formulate a constrained optimization problem, and then use the Lagrangian function and Karush–Kuhn–Tucker (KKT) optimality conditions~\cite{luenberger1984linear} to solve the constrained optimization problem. The Lagrange multiplier is searched using the bisection line search. A numerical example of a building control problem is given to demonstrate the effectiveness of the proposed methods.

{\bf Notation}: The adopted notation is as follows: ${\mathbb N}$
and ${\mathbb N}_+$: sets of nonnegative and positive integers,
respectively; ${\mathbb R}$: set of real numbers; ${\mathbb R}_+$:
set of nonnegative real numbers; ${\mathbb R}_{++}$: set of
positive real numbers; ${\mathbb R}^n $: $n$-dimensional Euclidean
space; ${\mathbb R}^{n \times m}$: set of all $n \times m$ real
matrices; $A^T$: transpose of matrix $A$; $A^{-T}$: transpose of matrix $A^{-1}$; $A \succ 0$ ($A \prec
0$, $A\succeq 0$, and $A\preceq 0$, respectively): symmetric
positive definite (negative definite, positive semi-definite, and
negative semi-definite, respectively) matrix $A$; $I_n $: $n
\times n$ identity matrix; ${\mathbb S} ^n $: symmetric $n \times
n$ matrices; ${\mathbb S}_+^n $: cone of symmetric $n \times n$
positive semi-definite matrices; ${\mathbb S}_{++}^n$:
symmetric $n\times n$ positive definite matrices; ${\bf Tr}(A)$:
trace of matrix $A$.

\section{Finite-horizon LQG problem}

Consider the stochastic linear time-invariant (LTI) system
\begin{align}
%===============================================================================
&x(k+1)=Ax(k)+Bu(k)+w(k),\label{stochastic-LTI-system2}
%===============================================================================
\end{align}
where $k\in {\mathbb N}$, $x(k)\in {\mathbb R}^n$ is the state
vector, $u(k)\in {\mathbb R}^m$ is the input vector, $x(0)\sim
{\cal N}(z,V)$ and $w(k)\sim {\cal N}(0,W)$ are mutually independent Gaussian random vectors so that ${\mathbb E}[x(0)] = z$, ${\mathbb E}[w(k)] = 0$, ${\mathbb E}[(x(0)- z)(x(0) - z)^T ] = V$, and ${\mathbb E}[w(k)w(k)^T ] = W$. In this paper, we consider the following multi objective finite-horizon LQG problem:
\begin{problem}[Multi objective LQG problem]\label{finite-horizon-LQR-problem}
%===============================================================================
Solve
\begin{align*}
%===============================================================================
&\min_{F_0,\ldots,F_{N-1}\in
{\mathbb R}^{m \times n}}\,{\mathbb E}(x(k)^T Q_f x(k)) + \sum_{k=0}^{N-1}{{\mathbb E}\left(\begin{bmatrix}
   x(k)\\ u(k)\\
\end{bmatrix}^T \begin{bmatrix}
   Q_k & 0 \\ 0 & R_k  \\
\end{bmatrix} \begin{bmatrix}
   x(k)\\ u(k) \\
\end{bmatrix} \right)}\\
%===============================================================================
&{\rm subject}\,\,{\rm to}\\
&x(k+1)=Ax(k)+Bu(k)+w(k),\\
%===============================================================================
&u(k)=F_k x(k),\\
%===============================================================================
&{\mathbb E}(x(k)^T \tilde Q_f x(k)) + \sum\limits_{k = 0}^{N - 1} {{\mathbb E}\left( {\left[ {\begin{array}{*{20}c}
   {x(k)}  \\
   {u(k)}  \\
\end{array}} \right]^T \left[ {\begin{array}{*{20}c}
   {\tilde Q_k } & 0  \\
   0 & {\tilde R_k }  \\
\end{array}} \right]\left[ {\begin{array}{*{20}c}
   {x(k)}  \\
   {u(k)}  \\
\end{array}} \right]} \right)}  \le \gamma.
%===============================================================================
\end{align*}
%===============================================================================
\end{problem}

Note that the second objective is encoded into the inequality instead of the objective function. The problem may be useful in many optimal control applications, for example, the building control problem, where the goal is to reduce the indoor temperature tracking error as much as possible while using limited energy for the control input within a certain time horizon.
\begin{example}\label{ex1}
Consider a room's thermal dynamic model expressed
as~\eqref{stochastic-LTI-system2} with
\begin{align*}
%=====================================================================================
&A = \begin{bmatrix}
   0.9500 & 0.0250 & 0.0250 & 0  \\
   0.0250 & 0.9750 & 0 & 0  \\
   0 & 0 & 1 & 0  \\
   0 & 0 & 0 & 1  \\
\end{bmatrix},\quad B = \begin{bmatrix}
   0.0250 \\
   0 \\
   0 \\
   0 \\
\end{bmatrix},
%=====================================================================================
\end{align*}
where $x_1(k)$ is the indoor air temperature ($^{\circ}\mathrm{C}$), $x_2(k)$
is the wall temperature ($^{\circ}\mathrm{C}$), $x_3(k)$ is the outdoor air
temperature ($^{\circ}\mathrm{C}$), $x_4(k)$ is the reference temperature
($^{\circ}\mathrm{C}$). The outdoor air temperature and reference
temperature are kept constants ($30^{\circ}\mathrm{C}$ and $24^{\circ}\mathrm{C}$,
respectively) over time. To this end, the initial state should be deterministic and fixed, and the last element of the noise $w(k)$ should be zero over time. In this case, the initial state should be set to be
\[
x(0) = \left[ {\begin{array}{*{20}c}
   x_1(0) & x_2(0) & {30} & {24}  \\
\end{array}} \right]^T
\]
where $x_1(0)$ is the initial indoor air temperature, and $x_2(0)$ is the initial wall temperature. We want to enforce the indoor temperature to track the reference temperature $24^\circ C$ as close as possible while satisfying the total input energy constraint
\[
{\mathbb E}\left[ {\sum\limits_{k = 0}^{N - 1} {u(k)^2 } } \right] \le \gamma.
\]

The problem can be formulated as~\cref{finite-horizon-LQR-problem} with
\begin{align*}
Q_f  =& Q_k  = \left[ {\begin{array}{*{20}c}
   1 & 0 & 0 & { - 1}  \\
\end{array}} \right]^T \left[ {\begin{array}{*{20}c}
   1 & 0 & 0 & { - 1}  \\
\end{array}} \right],\\
R_k  =& 0,k \in \{ 0,1, \ldots ,N - 1\}
\end{align*}
and
\[
\tilde Q_f  = \tilde Q_k  = 0,R_k  = 1,k \in \{ 0,1, \ldots ,N - 1\}
\]
The cost function enforces the  indoor temperature to track the desired reference temperature.
\end{example}

Another important remark is that the control policy in~\cref{finite-horizon-LQR-problem} is restricted to a linear state-feedback law. It is well known that the optimal solution to LQG problem without the energy constraint is a linear state-feedback policy. However, more careful attention should be paid to the constrained case. If it admits a nonlinear optimal solution, then~\cref{finite-horizon-LQR-problem} may give only a suboptimal linear solution. Fortunately, the SDP design method in~\cite{gattami2010generalized} suggests that the optimal solution of the energy constrained problem is also linear so that no conservatism exists in~\cref{finite-horizon-LQR-problem}.

Throughout the paper, we assume that there exists a strictly feasible solution, i.e., there exists a solution such that the strict inequality constraint is satisfied. A collection of assumptions that will be used throughout the paper is summarized below.
\begin{assumption}\label{assumption:2}
%===============================================================================
The following assumptions are made:
\begin{enumerate}
\item $Q_f \succeq 0,\tilde Q_f \succeq 0,Q_k \succeq 0,\tilde Q_k \succeq 0,R_k + \lambda \tilde R_k  \succ 0$ for all $k \ge 0$ and $\lambda > 0$;

\item $V \succ 0$, $W \succ 0$.
\end{enumerate}
%===============================================================================
\end{assumption}

The assumptions $V \succ 0$ and $W \succ 0$ imply that all elements of $x(0)$ and $w(k)$ are stochastic. For the deterministic case, we need to set $V = 0$ and $W = 0$, and if some of the elements of $x(0)$ and $w(k)$ are partially deterministic, then we need to set $V\succeq 0$ and $W\succeq 0$. These cases will be briefly addressed later.

If we define the covariance of the augmented vector $[x(k)^T,u(k)^T]^T\in {\mathbb R}^{n \times m}$
\begin{align*}
%===============================================================================
&S_k={\mathbb E}\left( \begin{bmatrix} x(k)\\ u(k)\\ \end{bmatrix} \begin{bmatrix}
   x(k)\\
   u(k)\\
\end{bmatrix}^T \right),\quad k \in \{0,\ldots,N-1\},
%===============================================================================
\end{align*}
then,~\cref{finite-horizon-LQR-problem} can be equivalently
converted to the matrix equality constrained optimization problem or the covariance selection problem.
\begin{problem}[Covariance selection problem]\label{primal-LQR}
%===============================================================================
Solve
\begin{align*}
%===============================================================================
&J_p^*:=\min_{S_0,\ldots,S_{N-1}\in {\mathbb S}^{n+m},F_0,\ldots,F_{N-1}\in {\mathbb R}^{m \times n}}\,J_p(\{S_k\}_{k=0}^{N-1})\\
%===============================================================================
&{\rm subject}\,\,{\rm to}\\
%===============================================================================
&\Phi (F_k,S_{k-1})=S_k\quad k \in \{1,\ldots,N-1\},\\
%===============================================================================
&\begin{bmatrix}
   I_n\\
   F_0\\
\end{bmatrix} (V+ zz^T) \begin{bmatrix}
   I_n\\
   F_0\\
\end{bmatrix}^T = S_0,\\
&C(\{ S_k \} _{k = 0}^{N - 1} ) \le \gamma,
\end{align*}
where
\begin{align*}
J_p(\{S_k\}_{k=0}^{N-1}):=& {\bf Tr}\left(Q_f
\left(\begin{bmatrix}
   A^T\\B^T\\
\end{bmatrix}^T S_{N - 1} \begin{bmatrix}
   A^T\\B^T\\
\end{bmatrix}+W\right) \right) +\sum_{k=0}^{N-1}{{\bf Tr}\left( \begin{bmatrix}
 Q_k & 0\\ 0 & R_k\\
\end{bmatrix} S_k\right)}\\
%===============================================================================
C(\{ S_k \} _{k = 0}^{N - 1} ):=& {\bf{Tr}}\left( {\tilde Q_f \left( {\left[ {\begin{array}{*{20}c}
   {A^T }  \\
   {B^T }  \\
\end{array}} \right]^T S_{N - 1} \left[ {\begin{array}{*{20}c}
   {A^T }  \\
   {B^T }  \\
\end{array}} \right] + W} \right)} \right) + \sum\limits_{k = 0}^{N - 1} {{\bf{Tr}}\left( {\left[ {\begin{array}{*{20}c}
   {\tilde Q_k } & 0  \\
   0 & {\tilde R_k }  \\
\end{array}} \right]S_k } \right)}\\
%===============================================================================
\Phi (F, S):=&\begin{bmatrix}
   I_n \\F\\
\end{bmatrix}\left( \begin{bmatrix}
   A^T\\ B^T\\
\end{bmatrix}^T S \begin{bmatrix}
   A^T\\B^T\\
\end{bmatrix}+W\right)\begin{bmatrix}
   I_n\\F\\
\end{bmatrix}^T
%===============================================================================
\end{align*}
%===============================================================================
\end{problem}
In~\cref{primal-LQR}, the matrix equality constraints represent the covariance updates.
Since~\cref{finite-horizon-LQR-problem} is strictly feasible, we can prove that~\cref{primal-LQR} is also strictly feasible. Since this fact will be used later, we make a formal assumption for convenience.
\begin{assumption}[Strict feasibility]\label{assumption:strict-feasibility}
There exists at least one set of matrices $\{(S_k,F_k)\}_{k=0}^{N-1}$ such that all the equalities in~\cref{primal-LQR} are satisfied and all inequalities are strictly satisfied.
\end{assumption}
Based on the assumptions and definitions in this section, we will address the main results in the next section.

\section{Main Results}

In this section, we present main results of this paper. We first present the KKT condition~\cite{luenberger1984linear} for the optimization~\cref{primal-LQR}, and find potential optimal solution candidates satisfying the KKT condition. Then, we find the set of optimal solutions and their properties. Based on the analysis, a bisection algorithm is developed.

\subsection{Lagrangian solution}
For any $P_0,\ldots,P_{N-1}\in {\mathbb S}_+^{n+m}$ and
$\lambda \in {\mathbb R}_+$, define the Lagrangian function of~\cref{primal-LQR}
\begin{align*}
%===============================================================================
&L(\{(S_k,F_k,P_k)\}_{k=0}^{N-1},\lambda)\\
:=& J_p(\{S_k\}_{k=0}^{N-1})+\sum_{k=1}^{N-1} {{\bf Tr}((\Phi(F_k,S_{k-1})-S_k) P_k)}\\
&+{\bf Tr}\left( \left(\begin{bmatrix}
   I\\F_0\\
\end{bmatrix}(V +zz^T) \begin{bmatrix}
   I\\ F_0\\
\end{bmatrix}^T- S_0\right) P_0 \right) + \lambda (C(\{ S_k \} _{k = 0}^{N - 1} ) - \gamma ),
%===============================================================================
\end{align*}
where $P_0,\ldots,P_{N-1}\in {\mathbb S}_+^{n+m}$ are called the Lagrangian multipliers or dual variables.

Rearranging some terms, it can be rewritten as
\begin{align}
%===============================================================================
&L(\{( S_k,F_k,P_k\}_{k=0}^{N-1},\lambda)= J_d (\{P_k,F_k\}_{k=0}^{N-1})\nonumber\\
%===============================================================================
& + {\bf{Tr}}\left( {\left( {\left[ {\begin{array}{*{20}c}
   {A^T }  \\
   {B^T }  \\
\end{array}} \right](Q_f  + \lambda \tilde Q_f )\left[ {\begin{array}{*{20}c}
   {A^T }  \\
   {B^T }  \\
\end{array}} \right]^T  - P_{N - 1} }  { + \left[ {\begin{array}{*{20}c}
   {Q_{N - 1} } & 0  \\
   0 & {R_{N - 1} }  \\
\end{array}} \right] + \lambda \left[ {\begin{array}{*{20}c}
   {\tilde Q_{N - 1} } & 0  \\
   0 & {\tilde R_{N - 1} }  \\
\end{array}} \right]} \right)S_{N - 1} } \right)\nonumber\\
%===============================================================================
&+ \sum_{k = 1}^{N - 1} {{\bf{Tr}}((\Gamma _k (F_k ,P_k ,\lambda ) - P_{k - 1} )S_{k - 1} )}\label{eq1}
%===============================================================================
\end{align}
where
\begin{align*}
J_d (\{ P_k ,\,F_k \} _{k = 0}^{N - 1} ):=& {\bf{Tr}}\left( {\left[ {\begin{array}{*{20}c}
   I  \\
   {F_0 }  \\
\end{array}} \right](V +zz^T) \left[ {\begin{array}{*{20}c}
   I  \\
   {F_0 }  \\
\end{array}} \right]^T P_0 } \right) + \sum\limits_{k = 1}^N {{\bf{Tr}}\left( {\left[ {\begin{array}{*{20}c}
   I  \\
   {F_k }  \\
\end{array}} \right]W\left[ {\begin{array}{*{20}c}
   I  \\
   {F_k }  \\
\end{array}} \right]^T P_k } \right)}
\end{align*}
and
\begin{align*}
\Gamma _k (F,\,P,\lambda ): =& \left[ {\begin{array}{*{20}c}
   {A^T }  \\
   {B^T }  \\
\end{array}} \right]\left[ {\begin{array}{*{20}c}
   I  \\
   F  \\
\end{array}} \right]^T P\left[ {\begin{array}{*{20}c}
   I  \\
   F  \\
\end{array}} \right]\left[ {\begin{array}{*{20}c}
   {A^T }  \\
   {B^T }  \\
\end{array}} \right]^T \\
&+ \left[ {\begin{array}{*{20}c}
   {Q_k } & 0  \\
   0 & {R_k }  \\
\end{array}} \right] + \lambda \left[ {\begin{array}{*{20}c}
   {\tilde Q_k } & 0  \\
   0 & {\tilde R_k }  \\
\end{array}} \right]
\end{align*}

Based on the Lagrangian function, the KKT condition can be summarized as
\begin{enumerate}
\item Primal feasibility condition:
\begin{align}
&\begin{bmatrix}
   I  \\
   {F_0 }  \\
\end{bmatrix} (V + zz^T) \begin{bmatrix}
   I  \\
   {F_0 }  \\
\end{bmatrix}^T = S_0,\nonumber\\
&\Phi (F_k ,S_{k - 1} )= S_k,\quad k \in \{ 1,2, \ldots ,N - 1\}\label{eq:10}\\
&\tilde J_p (\{ S_k \} _{k = 0}^{N - 1} ) \le \gamma.\label{eq:11}
\end{align}

\item Complementary slackness condition:
\begin{align}
\lambda (\tilde C(\{ S_k \} _{k = 0}^{N - 1} ) - \gamma ) = 0.\label{eq:12}
\end{align}

\item Dual feasibility condition:
\begin{align}
\lambda  \ge 0.\label{eq:13}
\end{align}

\item Stationary condition $\nabla _{S_k ,F_k } {\cal L}(\{ (S_k ,\,F_k ,\,P_k )\} _{k = 0}^{N - 1} ,\lambda ) = 0$:
\begin{align}
%===============================================================================
&P_N  = \begin{bmatrix}
   {Q_f + \lambda \tilde Q_f} & 0  \\
   0 & 0  \\
\end{bmatrix},\quad \Gamma_N (0,P_N,\lambda) = P_{N-1}\label{eq:6}\\
%===============================================================================
&\Gamma_k (F_k ,P_k,\lambda) =
P_{k - 1} ,\quad k \in \{ 1,2, \ldots N - 1\}\label{eq:7}\\
%===============================================================================
&(V+zz^T) (P_{0,12}  + F_0^T P_{0,22}) + (P_{0,12}^T + P_{0,\,22} F_0 )(V+zz^T)  = 0\label{eq:8}\\
%===============================================================================
&M_k (P_{k + 1,12}  + F_{k + 1}^T P_{k + 1,22} ) + (P_{k + 1,12}^T  + P_{k + 1,22} F_{k + 1} )M_k  = 0\label{eq:9}\\
%===============================================================================
&k \in \{ 1,2, \ldots ,N - 1\}\nonumber
%===============================================================================
\end{align}
where $M_k  = \begin{bmatrix}
   A & B  \\
\end{bmatrix} S_k \begin{bmatrix}
   A & B  \\
\end{bmatrix}^T + W$.

\end{enumerate}

Using the KKT condition, we establish a modified Riccati equation for solving the multi-objective problem in the following.
\begin{proposition}\label{theorem1}
%===============================================================================
Suppose that $\lambda \ge 0 $ is fixed and arbitrary. Consider the Riccati equation
\begin{align}
%===============================================================================
&A^T X_{k+1}^\lambda A-A^T X_{k+1}^\lambda B(R_k + \lambda \tilde R_k +B^T X_{k+1}^\lambda B)^{-1} B^T X_{k+1}^\lambda A + Q_k + \lambda \tilde Q_k=X_k^\lambda\label{Riccati}
%===============================================================================
\end{align}
for all $k \in \{0,\ldots,N-1\} $ with $X_N^\lambda =Q_f + \lambda \tilde Q_f$,
and define $\{( S_k^\lambda,F_k^\lambda,P_k^\lambda)\}_{k=0}^{N-1}$ with
\begin{align}
%===============================================================================
F_k^\lambda=&-(R_k + \lambda \tilde R_k+B^T X_{k+1}^\lambda B)^{-1} B^T X_{k+1}^\lambda A,\nonumber\\
%===============================================================================
S_k^\lambda=& \Phi (F_k^\lambda,S_{k-1}^\lambda),\quad S_0=\begin{bmatrix}
I\\ F_0^\lambda\\
\end{bmatrix} (V+zz^T) \begin{bmatrix}
 I\\ F_0^\lambda\\
\end{bmatrix}^T,\nonumber\\
%===============================================================================
P_k^\lambda =& \begin{bmatrix}
   Q_k + \lambda \tilde Q_k + A^T X_{k+1}^\lambda A & A^T X_{k+1}^\lambda B\\
   B^T X_{k+1}^\lambda A & R_k  + \lambda \tilde R_k + B^T X_{k+1}^\lambda B\\
\end{bmatrix},\label{primal-optimal-point}
%===============================================================================
\end{align}
where the superscript $\lambda$ is included to designate the dependence on $\lambda$. Then, $\{(S_k^\lambda,F_k^\lambda)\}_{k=0}^{N-1}$ is a primal feasible point of~\cref{primal-LQR} uniquely satisfying the primal feasibility condition~\eqref{eq:10} and $\{(P_k^\lambda,F_k^\lambda) \}_{k=0}^{N-1}$ uniquely satisfies the stationary condition~\eqref{eq:6}-\eqref{eq:9}.
%===============================================================================
\end{proposition}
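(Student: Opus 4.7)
The plan is to verify the claim in two halves—that $\{(S_k^\lambda,F_k^\lambda)\}$ satisfies \eqref{eq:10}, and that $\{(P_k^\lambda,F_k^\lambda)\}$ satisfies the stationary conditions \eqref{eq:6}--\eqref{eq:9}—and then to argue uniqueness by a backward induction in time. The main obstacle will be an algebraic ``completing-the-square'' identity that links Lagrangian stationarity to the Riccati recursion~\eqref{Riccati}.

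\textbf{Primal feasibility.} Since $S_0^\lambda$ is given by the initial-covariance formula and each subsequent $S_k^\lambda$ is generated by the forward recursion $S_k^\lambda=\Phi(F_k^\lambda,S_{k-1}^\lambda)$, all equalities in~\eqref{eq:10} hold by construction. For fixed $\{F_k^\lambda\}$, this forward recursion also makes $\{S_k^\lambda\}$ the only sequence that satisfies those equalities. This step does not invoke the Riccati equation.

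\textbf{Stationarity.} For the boundary condition~\eqref{eq:6}, substituting $F=0$ and $P_N=\mathrm{diag}(Q_f+\lambda\tilde Q_f,0)$ into $\Gamma_N$ retains only the $(1,1)$-block of $P_N$, and a direct expansion reproduces $P_{N-1}^\lambda$ from~\eqref{primal-optimal-point} once $X_N^\lambda=Q_f+\lambda\tilde Q_f$ is used. For~\eqref{eq:7}, the pivotal identity is
\[
\begin{bmatrix}I\\F_k^\lambda\end{bmatrix}^T P_k^\lambda \begin{bmatrix}I\\F_k^\lambda\end{bmatrix}=X_k^\lambda.
\]
Expanding the block quadratic form gives $P_{k,11}+P_{k,12}F_k^\lambda+(F_k^\lambda)^T P_{k,12}^T+(F_k^\lambda)^T P_{k,22}F_k^\lambda$; substituting $F_k^\lambda=-P_{k,22}^{-1}P_{k,12}^T$ collapses the last three terms into $-P_{k,12}P_{k,22}^{-1}P_{k,12}^T$, which is exactly the cross term $-A^T X_{k+1}^\lambda B(R_k+\lambda\tilde R_k+B^T X_{k+1}^\lambda B)^{-1}B^T X_{k+1}^\lambda A$ appearing in~\eqref{Riccati}. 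Hence the form equals $X_k^\lambda$, and wrapping with $\begin{bmatrix}A^T\\B^T\end{bmatrix}$ on either side and adding the cost-weight blocks reproduces $P_{k-1}^\lambda$, giving~\eqref{eq:7}. For~\eqref{eq:8}--\eqref{eq:9}, the bracketed factor $P_{k,12}+(F_k^\lambda)^T P_{k,22}$ vanishes identically by the definition of $F_k^\lambda$, so both equations reduce to $0=0$.

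\textbf{Uniqueness.} I would argue backward in time. The boundary~\eqref{eq:6} pins down $P_{N-1}$. Given $P_k^\lambda$, Assumption~\ref{assumption:2}(1) together with the standard Riccati fact $X_{k+1}^\lambda\succeq0$ ensures $P_{k,22}=R_k+\lambda\tilde R_k+B^T X_{k+1}^\lambda B\succ0$; combined with $M_k\succ0$ (from $W\succ0$) and $V+zz^T\succ0$ (from $V\succ0$), the gradient equations~\eqref{eq:8}--\eqref{eq:9} force the factor $P_{k,12}+F_k^T P_{k,22}$ to vanish, so $F_k=-P_{k,22}^{-1}P_{k,12}^T$ is uniquely determined; then~\eqref{eq:7} propagates $P_k\mapsto P_{k-1}$. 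The most delicate point here is converting the symmetrised form of~\eqref{eq:8}--\eqref{eq:9} into the statement that the bracketed factor itself vanishes, rather than merely being ``skew-like''; positive-definiteness of $M_k$ and $V+zz^T$ is what makes the argument go through.
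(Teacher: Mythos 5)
Your proposal is correct and follows essentially the same route as the paper's own proof: positive definiteness of $V+zz^T$ and $M_k$ forces the factor $P_{k,12}^T+P_{k,22}F_k$ to vanish, invertibility of $P_{k,22}=R_k+\lambda\tilde R_k+B^TX_{k+1}^\lambda B$ then pins down $F_k$ uniquely, and substituting back into \eqref{eq:6}--\eqref{eq:7} yields the Riccati recursion. Your version is in fact more explicit than the paper's (which simply says ``plugging this expression in leads to the construction''), supplying the completing-the-square identity $\bigl[\begin{smallmatrix}I\\F_k^\lambda\end{smallmatrix}\bigr]^TP_k^\lambda\bigl[\begin{smallmatrix}I\\F_k^\lambda\end{smallmatrix}\bigr]=X_k^\lambda$ and correctly flagging the passage from the symmetrized gradient equations to the vanishing of the bracketed factor as the one delicate step, which the paper glosses over with the same definiteness argument.
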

\begin{proof}
Using~\cref{assumption:2}, $V \succ 0$ implies that $V + zz^T$ is nonsingular, and consequently,~\eqref{eq:8} implies $P_{0,12}^T  + P_{0,\,22} F_0 =0$. Similarly, $W \succ 0$ with~\eqref{eq:9} implies $P_{k,12}^T  + P_{k,\,22} F_k  = 0$ for all $k \in \{0,1,\ldots, N-1\}$. On the other hand,~\eqref{eq:6} and~\eqref{eq:7} with the assumption that $R_k + \lambda \tilde R_k \succ 0 $ for any $\lambda >0$ in~\cref{assumption:2} ensure $P_{k,\,22}$ is nonsingular for all $k \in \{0,1,\ldots, N\}$. Therefore, the feedback gains are uniquely determined by $F_k  =  - P_{k,\,22}^{ - 1} P_{k,12}^T$ for all $k \in \{0,1,\ldots, N-1\}$. Plugging this expression into~\eqref{eq:6} and~\eqref{eq:7} leads to the construction in~\eqref{primal-optimal-point} with the Riccati equation~\eqref{Riccati}. Note that under~\cref{assumption:2} and fixed $\lambda > 0$, the KKT point is uniquely determined.
\end{proof}

If the inequality constraint is removed and $\lambda = 0$, then the Riccati equation in~\eqref{Riccati} is reduced to the standard Riccati equation. In this case, it is clear that the solution which satisfies the KKT condition is unique. Therefore, the solution obtained form the Riccati equation is the unique optimal solution, which is a well-known fact.
\begin{proposition} Consider~\cref{primal-LQR} without the inequality constraint. Then, $\{(S_k^0,F_k^0)\}_{k=0}^{N-1}$ is the unique optimal solution of~\cref{primal-LQR}.
\end{proposition}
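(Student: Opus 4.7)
The plan is to observe that dropping the inequality constraint collapses Problem \ref{primal-LQR} onto the classical unconstrained finite-horizon LQG problem, whose optimal state-feedback policy is well known to be the unique linear law arising from the standard Riccati recursion. The proof then amounts to invoking Proposition \ref{theorem1} at $\lambda = 0$ and matching the two descriptions.

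First I would note that once the inequality is removed the Lagrangian carries no multiplier $\lambda$, so the complementary slackness and dual feasibility conditions \eqref{eq:12}--\eqref{eq:13} become vacuous and the primal feasibility inequality \eqref{eq:11} disappears. What remains is \eqref{eq:10} together with the stationary conditions \eqref{eq:6}--\eqref{eq:9}, each specialized to $\lambda = 0$ so that all tilded quantities drop out. Proposition \ref{theorem1} applied at $\lambda = 0$ then asserts that $\{(S_k^0, F_k^0, P_k^0)\}_{k=0}^{N-1}$ is the unique tuple satisfying this reduced KKT system, and the associated recursion \eqref{Riccati} becomes the standard LQG Riccati equation.

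Next I would invoke the classical LQG result: under Assumption \ref{assumption:2}, the unconstrained finite-horizon LQG problem admits an optimal linear state-feedback law determined by the standard Riccati equation, which matches $F_k^0$, and the induced augmented covariance equals $S_k^0$. Any optimal point of the equality-constrained Problem \ref{primal-LQR} must satisfy its first-order necessary conditions, since the equality constraints can be rewritten as a smooth forward-propagation parametrization of $\{S_k\}_{k=0}^{N-1}$ in terms of $\{F_k\}_{k=0}^{N-1}$, sidestepping any constraint-qualification issue. Combining optimality of the Riccati-based pair with the uniqueness of the KKT point from Proposition \ref{theorem1} then yields that $\{(S_k^0, F_k^0)\}_{k=0}^{N-1}$ is the unique optimal solution.

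The main obstacle is the uniqueness claim, since Assumption \ref{assumption:2} only supplies $R_k + \lambda \tilde R_k \succ 0$ for $\lambda > 0$, so at $\lambda = 0$ one merely has $R_k \succeq 0$. The cleanest way around this is to use the reparametrization over $\{F_k\}_{k=0}^{N-1}$ above: after substitution the cost becomes a smooth polynomial in the $F_k$, and dynamic programming shows that the value function is unique and attained by the Riccati feedback whenever the inverses in \eqref{Riccati} are well defined. Thus, either under the mild strengthening $R_k \succ 0$ or more generally via the uniqueness of the dynamic programming value function, the stationary point $\{F_k^0\}_{k=0}^{N-1}$ is the unique minimizer and the corresponding $\{S_k^0\}_{k=0}^{N-1}$ is uniquely induced through forward propagation.
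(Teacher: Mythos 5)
Your argument is essentially the paper's: the paper's proof simply asserts that $\{S_k^0,F_k^0,P_k^0\}_{k=0}^{N-1}$ uniquely satisfies the KKT conditions and concludes via KKT necessity (plus implicit existence of an optimizer) that it is the unique optimal solution, which is exactly your route through \cref{theorem1} at $\lambda=0$. You are in fact more careful than the paper, since you flag that \cref{assumption:2} only guarantees $R_k+\lambda\tilde R_k\succ 0$ for $\lambda>0$, a degeneracy the paper's one-line proof silently ignores.
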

\begin{proof}
It is clear that the tuples $\{S_k^0,F_k^0,P_k^0\}_{k=0}^{N-1}$ uniquely satisfy the KKT condition. Since the KKT condition is a necessary condition for optimality, it is a unique optimal solution of~\cref{primal-LQR} without the inequality constraint.
\end{proof}

\cref{theorem1} tells us that the Riccati equation can be induced from the Lagrangian function and KKT condition in optimization theory instead of the classical argument from the value function and HJB equation. Moreover, we can see that the solution of the multi-objective LQG defined in~\cref{finite-horizon-LQR-problem} is nothing but the solution of a standard LQG problem with modified weight $Q_f  + \lambda \tilde Q_f ,Q_k  + \lambda \tilde Q_k ,R_k  + \lambda \tilde R_k$ and an appropriately chosen $\lambda >0$.
Let us now focus on how to determine the Lagrange multiplier $\lambda$ satisfying the KKT condition. We need to consider the following three scenarios:
\begin{enumerate}
\item If the strict inequality $C(\{ S_k^0 \} _{k = 0}^{N - 1} ) < \gamma$ is already satisfied with $\{ F_k^0 \} _{k = 0}^{N - 1}$ obtained using the standard Riccati equation, then $\lambda = 0$ solves the complementary slackness condition. We do not need to do anything in this case.

\item Moreover, if the equality $C(\{ S_k^0 \} _{k = 0}^{N - 1} ) = \gamma$ is satisfied with $\{ F_k^0 \} _{k = 0}^{N - 1}$ obtained using the standard Riccati equation, then any $\lambda\ge 0$ solves the complementary slackness condition. However, when $\lambda > 0$, the corresponding $\{ S_k^\lambda,F_k^\lambda ,P_k^\lambda \} _{k = 0}^{N - 1}$ may be different from $\{ S_k^0,F_k^0,P_k^0\} _{k = 0}^{N - 1}$. Therefore, to use the variables obtained in~\cref{theorem1} as a solution to the KKT condition, we need to set $\lambda = 0$.

\item Lastly, assume that $C (\{ S_k^0 \} _{k = 0}^{N - 1} ) > \gamma$ holds with $\{ F_k^0 \} _{k = 0}^{N - 1}$ obtained using the standard Riccati equation. Then, some $\lambda > 0$ solves the complementary slackness condition if $C (\{ S_k^\lambda  \} _{k = 0}^{N - 1} ) = \gamma$. Suppose that $\lambda^*>0$ is such a number. Then, the corresponding tuple $(\lambda ^* ,S_k^{\lambda ^* } ,F_k^{\lambda ^* } ,P_k^{\lambda ^* } )$ satisfies the KKT condition.
\end{enumerate}

For simplicity of the presentation, we only focus on the last case because the other cases are trivial, and we formalize it in the following assumption.
\begin{assumption}[Nontrivial scenario]\label{assumption:1}
Throughout the paper, we assume that  $C(\{ S_k^0 \} _{k = 0}^{N - 1} ) > \gamma$ holds with $\{ F_k^0 \} _{k = 0}^{N - 1}$ obtained using the standard Riccati equation.
\end{assumption}

To proceed further, we need to establish some properties of the function $f:{\mathbb R}_ +   \to {\mathbb R}$ defined as
\[
f(\lambda ) := C(\{ S_k^\lambda  \} _{k = 0}^{N - 1} ) - \gamma, \quad \lambda \ge 0,
\]
which evaluates the error in the inequality constraint. In the following, we study various properties of $f$ which play important roles throughout this paper.
\begin{proposition}[Properties of $f$]\label{proposition:1}
Define the function $f:\mathbb R_+ \to \mathbb R$ as
\[
f(\lambda ) := C(\{ S_k^\lambda  \} _{k = 0}^{N - 1} ) - \gamma
\]
Then, the following statements hold:
\begin{enumerate}
\item $f$ is continuous over $\mathbb R_+$;

\item $f(\lambda ) \le f(\lambda  + \varepsilon )$  holds for any $\varepsilon >0$;

\item If $f(\lambda ) = f(\lambda  + \varepsilon )$ holds for some $\varepsilon >0$, then $J_p (\{ S_k^\lambda  \} _{k = 0}^{N - 1} ) = J_p (\{ S_k^{\lambda + \varepsilon}  \} _{k = 0}^{N - 1} )$;

\item $f(0)>0$;

\item There exists a $\lambda >0$ such that $f(\lambda)<0$;

\item Define the set-valued mapping $T:(V,W) \mapsto \{ \lambda  > 0:f(\lambda ) = 0\}$. Then, $T(V,W)$ is a closed line segment.

\end{enumerate}
\end{proposition}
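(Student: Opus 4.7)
The plan is to exploit the fact, established in \cref{theorem1}, that $\{(S_k^\lambda, F_k^\lambda)\}_{k=0}^{N-1}$ is the unique minimizer of the modified LQG cost $J_p + \lambda(C - \gamma)$ over the equality-constrained set, and then deduce the six properties from elementary convex-analytic arguments applied to the Lagrangian dual value $g(\lambda) := J_p(\{S_k^\lambda\}_{k=0}^{N-1}) + \lambda f(\lambda)$. For continuity (item~1), the right-hand side of the backward Riccati recursion \eqref{Riccati} is a composition of matrix products and one inversion, and the inverted matrix $R_k + \lambda \tilde R_k + B^T X_{k+1}^\lambda B$ is strictly positive definite under \cref{assumption:2}. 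Hence each $X_k^\lambda$ is continuous in $\lambda$, and continuity of $F_k^\lambda$, of $S_k^\lambda$ via the forward recursion $\Phi$, and ultimately of $C(\{S_k^\lambda\})$ follows.

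For monotonicity (item~2), I will use a two-point exchange argument. Since $\{S_k^\lambda, F_k^\lambda\}$ minimizes $J_p + \lambda(C - \gamma)$ over the equality-feasible set, and $\{S_k^{\lambda'}, F_k^{\lambda'}\}$ belongs to that set,
\begin{align*}
J_p(\{S_k^\lambda\}) + \lambda C(\{S_k^\lambda\}) \le J_p(\{S_k^{\lambda'}\}) + \lambda C(\{S_k^{\lambda'}\}),
\end{align*}
and the symmetric inequality with $\lambda$ and $\lambda'$ swapped holds. Adding the two yields $(\lambda - \lambda')(C(\{S_k^\lambda\}) - C(\{S_k^{\lambda'}\})) \le 0$, establishing that $f$ is monotone in $\lambda$. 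Item~3 is then immediate: if $f(\lambda) = f(\lambda + \varepsilon)$, then with $\lambda' = \lambda + \varepsilon$ the two inequalities above collapse (after cancellation of the $C$-terms) to yield $J_p(\{S_k^\lambda\}) = J_p(\{S_k^{\lambda+\varepsilon}\})$. Item~4 is exactly \cref{assumption:1}.

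The main obstacle is item~5. For this I invoke the strict feasibility granted by \cref{assumption:strict-feasibility}: there exist primal-feasible $\{(\bar S_k, \bar F_k)\}_{k=0}^{N-1}$ with $C(\{\bar S_k\}) = \gamma - \delta$ for some $\delta > 0$. Since $\{\bar S_k, \bar F_k\}$ is equality-feasible and $\{S_k^\lambda, F_k^\lambda\}$ is the minimizer,
\begin{align*}
g(\lambda) \le J_p(\{\bar S_k\}) + \lambda(C(\{\bar S_k\}) - \gamma) = J_p(\{\bar S_k\}) - \lambda \delta,
\end{align*}
so $g(\lambda) \to -\infty$ as $\lambda \to \infty$. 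On the other hand, every term of $J_p$ is a trace of a product of positive semidefinite matrices (under \cref{assumption:2}), so $J_p(\{S_k^\lambda\}) \ge 0$. Thus, if $f(\lambda) \ge 0$ were to hold for every $\lambda > 0$, then $g(\lambda) = J_p(\{S_k^\lambda\}) + \lambda f(\lambda) \ge 0$, contradicting $g(\lambda) \to -\infty$. Hence $f(\lambda) < 0$ for some $\lambda > 0$.

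Finally, for item~6, the set $T(V,W) = f^{-1}(\{0\}) \cap (0, \infty)$ is closed in $(0,\infty)$ by continuity (item~1), and is nonempty and bounded by items~4, 5, and the intermediate value theorem combined with monotonicity. Setting $\lambda_1 := \inf T(V,W)$ and $\lambda_2 := \sup T(V,W)$, both endpoints lie in $T(V,W)$ by closedness, and for any $\lambda \in [\lambda_1, \lambda_2]$, monotonicity from item~2 sandwiches $f(\lambda)$ between $f(\lambda_1) = 0$ and $f(\lambda_2) = 0$, forcing $f(\lambda) = 0$. Hence $T(V,W) = [\lambda_1, \lambda_2]$ is a closed line segment, as claimed.
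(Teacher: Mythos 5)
Your proof is correct, and for items 1--4 and 6 it follows essentially the same route as the paper: continuity via the Riccati/forward recursions, the two-point exchange inequalities for monotonicity and for item 3, \cref{assumption:1} for item 4, and the ``endpoints attained plus monotonicity fills in the interval'' argument for item 6. The genuine difference is item 5. The paper normalizes the augmented objective in~\eqref{eq:4} by $\lambda^{-1}$ and argues informally that as $\lambda \to \infty$ the objective ``converges to'' $C - \gamma$, so strict feasibility forces $f(\lambda)<0$ in the limit; this step is not fully justified because it silently interchanges the limit with the minimization. Your argument is cleaner and fully rigorous: you upper-bound the dual value $g(\lambda)$ by evaluating the augmented objective at a fixed strictly feasible point (giving $g(\lambda) \le J_p(\{\bar S_k\}) - \lambda\delta \to -\infty$), lower-bound it by $\lambda f(\lambda)$ using $J_p \ge 0$ (traces of products of positive semidefinite matrices under \cref{assumption:2}), and conclude by contradiction. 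This buys a complete proof of item 5 at the cost of one extra observation ($J_p \ge 0$) that the paper never needs to invoke.

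One caveat on item 2: your identity $(\lambda - \lambda')\bigl(C(\{S_k^\lambda\}) - C(\{S_k^{\lambda'}\})\bigr) \le 0$ establishes that $f$ is \emph{non-increasing}, i.e., $f(\lambda+\varepsilon) \le f(\lambda)$, and you should say so explicitly rather than just ``monotone.'' This is the opposite direction from the inequality as literally written in the proposition statement, but it agrees with the paper's own proof (which refutes $C(\{S_k^{\lambda+\varepsilon}\}) > C(\{S_k^\lambda\})$), with the subsequent discussion that $f$ is ``monotonically decreasing,'' and with the plotted $f$; the statement's inequality appears to be a typo, so your derivation proves the intended claim.
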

\begin{proof}
\begin{enumerate}
\item From the definition, $P_k^\lambda$ is linear in $\lambda$, $F_k^\lambda$ is rational, whose entries are finite for a finite $\lambda \in \mathbb R_+$ because the inverse matrix $(R_k + \lambda \tilde R_k+B^T X_{k+1}^\lambda B)^{-1}$ in $F_k^\lambda=-(R_k + \lambda \tilde R_k+B^T X_{k+1}^\lambda B)^{-1} B^T X_{k+1}^\lambda A$ is finite for all $\lambda \in \mathbb R_+$. Therefore, from the definition, $S_k^\lambda$ is also rational and finite over $\lambda \in \mathbb R_+$, which implies that $S_k^\lambda$ is continuous in $\lambda  \in \mathbb R_+$. This completes the proof.

\item We only need to prove the inequality for $C(\{ S_k^\lambda  \} _{k = 0}^{N - 1} )$. By contradiction, suppose that $C(\{ S_k^{\lambda  + \varepsilon } \} _{k = 0}^{N - 1} ) > C(\{ S_k^\lambda  \} _{k = 0}^{N - 1} )$ holds. For a fixed $\lambda$, we see from the KKT condition that the problem is nothing but the optimization
\begin{align}
%===============================================================================
&J_p^*:=\min_{S_0,\ldots,S_{N-1}\in {\mathbb S}^{n+m},F_0,\ldots,F_{N-1}\in {\mathbb R}^{m \times n}}\,J_p(\{S_k\}_{k=0}^{N-1})\nonumber\\
& + \lambda[C(\{ S_k \} _{k = 0}^{N - 1} ) - \gamma ]\label{eq:4}\\
%===============================================================================
&{\rm subject}\,\,{\rm to}\nonumber\\
%===============================================================================
&\Phi (F_k,S_{k-1})=S_k\quad k \in \{1,\ldots,N-1\},\nonumber\\
%===============================================================================
&\begin{bmatrix}
   I_n\\
   F_0\\
\end{bmatrix} W_f \begin{bmatrix}
   I_n\\
   F_0\\
\end{bmatrix}^T = S_0\nonumber
\end{align}
with an augmented objective. Since $\{ S_k^\lambda  \} _{k = 0}^{N - 1}$ is the optimal solution corresponding to $\lambda$, it follows that
\begin{align}
&J_p (\{ S_k^\lambda  \} _{k = 0}^{N - 1} ) + \lambda [C (\{ S_k^\lambda  \} _{k = 0}^{N - 1} ) - \gamma ]\le J_p (\{ S_k^{\lambda  + \varepsilon } \} _{k = 0}^{N - 1} ) + \lambda [C (\{ S_k^{\lambda  + \varepsilon } \} _{k = 0}^{N - 1} ) - \gamma ]\label{eq2}
\end{align}
where $\{ S_k^{\lambda + \varepsilon}  \} _{k = 0}^{N - 1}$ is the optimal solution corresponding to $\varepsilon \leftarrow \lambda + \varepsilon$. On the other hand, we have
\begin{align}
&J_p (\{ S_k^{\lambda  + \varepsilon } \} _{k = 0}^{N - 1} ) + (\lambda  + \varepsilon )[C(\{ S_k^{\lambda  + \varepsilon } \} _{k = 0}^{N - 1} ) - \gamma ] \le J_p (\{ S_k^\lambda  \} _{k = 0}^{N - 1} ) + (\lambda  + \varepsilon )[C(\{ S_k^\lambda  \} _{k = 0}^{N - 1} ) - \gamma ]\label{eq:5}
\end{align}
which leads to
\begin{align}
&J_p (\{ S_k^{\lambda  + \varepsilon } \} _{k = 0}^{N - 1} ) + \lambda [C(\{ S_k^{\lambda  + \varepsilon } \} _{k = 0}^{N - 1} ) - \gamma ]\nonumber\\
\le& J_p (\{ S_k^\lambda  \} _{k = 0}^{N - 1} ) + \lambda [C(\{ S_k^\lambda  \} _{k = 0}^{N - 1} ) - \gamma ] + \varepsilon [C(\{ S_k^\lambda  \} _{k = 0}^{N - 1} ) - C(\{ S_k^{\lambda  + \varepsilon } \} _{k = 0}^{N - 1} )]\label{eq3}
\end{align}

Combining~\eqref{eq3} with~\eqref{eq2} yields
\[
0 \le \varepsilon [C(\{ S_k^\lambda  \} _{k = 0}^{N - 1} ) - C(\{ S_k^{\lambda  + \varepsilon } \} _{k = 0}^{N - 1} )]
\]
which contradicts with our hypothesis. This completes the proof.

\item Assume $f(\lambda ) = f(\lambda  + \varepsilon )$ holds for some $\varepsilon >0$. Then,~\eqref{eq2} leads to $J_p (\{ S_k^\lambda  \} _{k = 0}^{N - 1} ) \le J_p (\{ S_k^{\lambda  + \varepsilon } \} _{k = 0}^{N - 1} )$, while~\eqref{eq:5} yields $J_p (\{ S_k^{\lambda  + \varepsilon } \} _{k = 0}^{N - 1} ) \le J_p (\{ S_k^\lambda  \} _{k = 0}^{N - 1} )$. Combining the two inequalities leads to the desired conclusion.

\item The fourth statement is true due to~\cref{assumption:1}.

\item Note that the objective in~\eqref{eq:4} can be replaced with $\lambda ^{ - 1} J_p (\{ S_k^\lambda \} _{k = 0}^{N - 1} ) + C(\{ S_k^\lambda \} _{k = 0}^{N - 1} ) - \gamma$ without changing the optimal solutions.
  As $\lambda \to \infty$, the objective converges to $C(\{ S_k^\lambda \} _{k = 0}^{N - 1} ) - \gamma = f(\lambda)$, which implies $C(\{ S_k^\lambda \} _{k = 0}^{N - 1} ) - \gamma = f(\lambda) < 0$ as $\lambda \to \infty$ due to the strict feasibility assumption in~\cref{assumption:strict-feasibility}. Since $f$ is continuous over $\mathbb R_+$ from the first statement, there should exists $\lambda >0 $ such that $f(\lambda) <0$.

\item Define $a = \arg\sup \{ \lambda  > 0:f(\lambda ) = 0\}$ and $b = \arg\inf \{ \lambda  > 0:f(\lambda ) = 0\}$. From the continuity of $f$, the supremum and infimum are attained; otherwise, $f$ should be discontinuous. Therefore, we can define $a = \max \{ \lambda  > 0:f(\lambda ) = 0\}$ and $b = \min \{ \lambda > 0:f(\lambda ) = 0\}$. From the second statement, we see that $f(\lambda) = 0$ for all $\lambda \in [b,a]$. It completes the proof.

\end{enumerate}
\end{proof}
\cref{proposition:1} suggests that $f$ is monotonically decreasing over the nonnegative real numbers. Moreover, we can choose a $\bar\lambda >0 $ such that $f(\lambda ) < 0,\forall \lambda  \in [\bar \lambda ,\infty )$. Let $\tilde \lambda \in [\bar \lambda ,\infty )$. Then, $f(\lambda)$ is a monotonically decreasing over $\lambda \in [0,\tilde \lambda]$, which connects $f(\tilde \lambda)<0$ and $f(0)>0$. The graph of $f$ is illustrated in the following example.
\begin{example}
Let us consider~\cref{ex1} again. With $\gamma = 3$ and $N=10$, the function value $f(\lambda)$ is plotted in~\cref{fig:1}, demonstrating the monotonically non-increasing property and the zero crossing property in~\cref{proposition:1}.
\begin{figure}[t]
%=============================================================================================================
\centering\includegraphics[width=8cm,height=6cm]{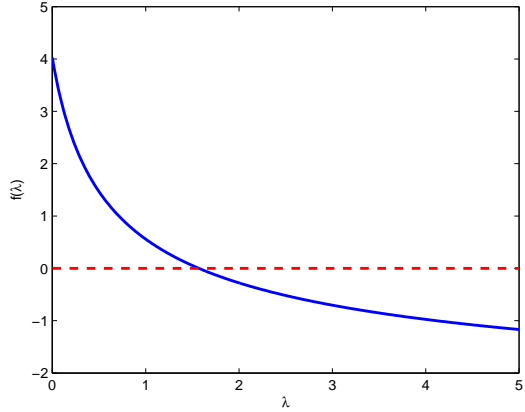}
\caption{$f(\lambda)$ for different $\lambda \in [0,5]$}\label{fig:1}
%=============================================================================================================
\end{figure}
\end{example}

Based on~\cref{proposition:1}, we can characterize the set of the KKT points. In particular, it turns out that the KKT point corresponding to $\lambda$ should satisfies the constraint $C(\{ S_k^\lambda \} _{k = 0}^{N - 1} ) = \gamma$·
\begin{proposition}\label{thm:1}
The set of variables satisfying the KKT condition is all tuples $(\lambda ,S_k^\lambda  ,F_k^\lambda  ,P_k^\lambda  )_{k = 0}^{N - 1}$ such that $\lambda>0$ and $C(\{ S_k^\lambda \} _{k = 0}^{N - 1} ) = \gamma$.
\end{proposition}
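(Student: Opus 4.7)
The plan is to use \cref{theorem1} to reduce the characterization of KKT points to a one-parameter search over $\lambda \ge 0$, and then to filter out the admissible $\lambda$'s by checking the remaining KKT conditions. First I would observe that \cref{theorem1} already establishes that, for every fixed $\lambda \ge 0$, the stationary condition~\eqref{eq:6}--\eqref{eq:9} together with the primal equality conditions in~\eqref{eq:10} are uniquely satisfied by the tuple $(S_k^\lambda,F_k^\lambda,P_k^\lambda)_{k=0}^{N-1}$ built from the modified Riccati equation~\eqref{Riccati}. Any KKT point is therefore automatically of the form $(\lambda,S_k^\lambda,F_k^\lambda,P_k^\lambda)$ for some $\lambda \ge 0$, so only the inequality primal feasibility condition~\eqref{eq:11} and the complementary slackness condition~\eqref{eq:12} remain to be verified.

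Next I would split into the two cases $\lambda=0$ and $\lambda>0$. The case $\lambda=0$ is ruled out immediately by \cref{assumption:1}, which yields $f(0)>0$ and hence a violation of~\eqref{eq:11}. For $\lambda>0$, the complementary slackness condition~\eqref{eq:12} forces $f(\lambda)=0$, i.e., $C(\{S_k^\lambda\}_{k=0}^{N-1})=\gamma$; conversely, any such $\lambda$ automatically satisfies dual feasibility~\eqref{eq:13}, primal feasibility~\eqref{eq:11} with equality, and complementary slackness~\eqref{eq:12}, so the corresponding tuple is a KKT point. This yields both directions of the characterization.

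Non-emptiness of the set of admissible $\lambda$'s is then guaranteed by \cref{proposition:1}: continuity of $f$, together with $f(0)>0$ and the existence of some $\lambda>0$ with $f(\lambda)<0$, produces a zero via the intermediate value theorem, and the full set of zeros was shown there to be a closed line segment. The only subtle step is the uniqueness invocation from \cref{theorem1}, since it is what permits a scalar parametrization of \emph{all} KKT candidates; once that is in hand, the rest is a direct case analysis against the four KKT conditions, and I do not expect any serious obstacle beyond careful bookkeeping.
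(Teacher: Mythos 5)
Your proposal is correct and follows essentially the same route as the paper: reduce all KKT candidates to the one-parameter family $(\lambda,S_k^\lambda,F_k^\lambda,P_k^\lambda)$ via the uniqueness in \cref{theorem1}, then observe that for $\lambda>0$ complementary slackness forces $C(\{S_k^\lambda\}_{k=0}^{N-1})=\gamma$ while \cref{assumption:1} rules out $\lambda=0$. If anything, your write-up is more complete than the paper's own two-line case analysis, since you make the reduction to the scalar parametrization and the exclusion of $\lambda=0$ explicit.
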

\begin{proof}
If $C(\{ S_k^\lambda \} _{k = 0}^{N - 1} ) > \gamma$, the KKT condition is obviously not satisfied because the complementary slackness condition~\eqref{eq:12} is not satisfied with $\lambda>0$. If $C(\{ S_k^\lambda \} _{k = 0}^{N - 1} ) < \gamma$, then the complementary slackness condition is not satisfied with $\lambda >0$. Only the case that KKT is satisfied with $\lambda >0$ is the case that $C(\{ S_k^\lambda \} _{k = 0}^{N - 1} ) = \gamma$ holds. This completes the proof.
\end{proof}

\cref{thm:1} gives us a clue on how to decide the KKT point. However, since the KKT condition is a necessary condition for the optimality, there is no guarantee that a KKT point found is actually an optimal solution. Fortunately, we can prove that all the KKT points characterized in~\cref{thm:1} constitute the optimal solutions.
\begin{proposition}\label{thm:2}
Consider any tuples $(\lambda ,S_k^\lambda  ,F_k^\lambda  ,P_k^\lambda  )_{k = 0}^{N - 1}$ such that $\lambda>0$ and $C(\{ S_k^\lambda \} _{k = 0}^{N - 1} ) = \gamma$. The set of such tuples is formally defined as
\[
\Lambda : = \{ (\lambda ,S_k^\lambda  ,F_k^\lambda  ,P_k^\lambda  )_{k = 0}^{N - 1} :\lambda  > 0, C(\{ S_k^\lambda  \} _{k = 0}^{N - 1} ) = \gamma \}
\]
Then, the corresponding $\{(S_k^\lambda,F_k^\lambda)\}_{k=0}^{N-1}$ with $(\lambda ,S_k^\lambda  ,F_k^\lambda  ,P_k^\lambda  )_{k = 0}^{N - 1} \in \Lambda$ is an optimal solution of the constrained LQG problem in~\cref{primal-LQR}.
\end{proposition}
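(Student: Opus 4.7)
My approach is to absorb the inequality constraint into the objective via the multiplier $\lambda$ and reduce the constrained problem to an unconstrained LQG with $\lambda$-perturbed weights, for which the Riccati tuple from~\cref{theorem1} is already known to be globally optimal. The algebraic identity underlying everything is
\[
J_p(\{S_k\}_{k=0}^{N-1}) + \lambda\bigl(C(\{S_k\}_{k=0}^{N-1}) - \gamma\bigr) \;=\; \widehat J_p(\{S_k\}_{k=0}^{N-1}) - \lambda\gamma,
\]
where $\widehat J_p$ has exactly the form of $J_p$ but with the $\lambda$-perturbed weights $Q_f + \lambda\tilde Q_f$, $Q_k + \lambda\tilde Q_k$, $R_k + \lambda\tilde R_k$. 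Under~\cref{assumption:2} these perturbed weights satisfy $R_k + \lambda\tilde R_k \succ 0$ for the $\lambda>0$ picked in $\Lambda$, so the perturbed problem is a well-posed finite-horizon LQG.

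I would then chain three inequalities. First, for any primal-feasible $\{(S_k, F_k)\}_{k=0}^{N-1}$ of~\cref{primal-LQR}, primal feasibility gives $C(\{S_k\}) \le \gamma$ and by hypothesis $\lambda > 0$, so
\[
J_p(\{S_k\}_{k=0}^{N-1}) \;\ge\; J_p(\{S_k\}_{k=0}^{N-1}) + \lambda\bigl(C(\{S_k\}_{k=0}^{N-1}) - \gamma\bigr).
\]
Second, by the displayed identity, the right-hand side equals $\widehat J_p - \lambda\gamma$; applying the unconstrained-case proposition stated just before~\cref{proposition:1} (with $Q_k, R_k, Q_f$ replaced by their $\lambda$-perturbed versions) shows that the unique global minimum of $\widehat J_p$ over primal-feasible tuples is attained at the Riccati tuple $\{(S_k^\lambda, F_k^\lambda)\}_{k=0}^{N-1}$ of~\cref{theorem1}, whence
\[
J_p(\{S_k\}) + \lambda\bigl(C(\{S_k\}) - \gamma\bigr) \;\ge\; J_p(\{S_k^\lambda\}) + \lambda\bigl(C(\{S_k^\lambda\}) - \gamma\bigr).
\]
Third, the defining property $C(\{S_k^\lambda\}_{k=0}^{N-1}) = \gamma$ of the set $\Lambda$ kills the rightmost term, leaving $J_p(\{S_k^\lambda\})$. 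Chaining these three steps yields $J_p(\{S_k\}) \ge J_p(\{S_k^\lambda\})$ for every primal-feasible tuple, which is exactly optimality.

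The principal obstacle is the middle step: one needs that the Riccati tuple is a \emph{global} minimizer of the perturbed LQG, not merely a stationary KKT point. The unconstrained proposition proved earlier handled exactly this by observing that the KKT system of the standard LQG has a unique solution and that KKT is a necessary condition for optimality, so the unique KKT tuple must be the global optimum. That argument uses nothing about the weights beyond the positivity requirements of~\cref{assumption:2}, which the $\lambda$-perturbed weights inherit for every $\lambda>0$; hence the uniqueness-plus-necessity conclusion transfers verbatim to the perturbed problem. The remaining pieces of the proof are routine inequality bookkeeping and the immediate use of complementary slackness from the hypothesis.
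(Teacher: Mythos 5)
Your proof is correct, but it takes a genuinely different route from the paper. The paper argues top-down: it invokes \cref{thm:1} to identify $\Lambda$ with the set of all KKT points, asserts that at least one element of $\Lambda$ must be optimal (because KKT is necessary for optimality), and then uses statement~3) of \cref{proposition:1} to propagate optimality to every other element of $\Lambda$, since they all share the same value of $J_p$. Your argument is instead the classical Lagrangian sufficiency theorem applied bottom-up: you absorb $\lambda C$ into the objective via the linearity of $J_p$ and $C$ in $\{S_k\}$, observe that the resulting $\widehat J_p$ is the cost of a standard LQG with weights $Q_f+\lambda\tilde Q_f$, $Q_k+\lambda\tilde Q_k$, $R_k+\lambda\tilde R_k$ (well-posed by \cref{assumption:2}), and chain feasibility ($C\le\gamma$, $\lambda>0$), global optimality of the Riccati tuple for the perturbed unconstrained problem, and complementary slackness ($C(\{S_k^\lambda\})=\gamma$) to get $J_p(\{S_k\})\ge J_p(\{S_k^\lambda\})$ for every feasible competitor. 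What your route buys is worth noting: the paper's proof silently presupposes that an optimal solution of the (nonconvex) \cref{primal-LQR} exists and satisfies KKT for some multiplier, which requires a constraint qualification it does not verify; your sufficiency argument sidesteps both issues and directly certifies every element of $\Lambda$ as optimal, needing neither KKT necessity nor \cref{proposition:1}(3). The one load-bearing step you inherit from the paper is the global (not merely stationary) optimality of the Riccati tuple for the perturbed unconstrained LQG; you justify it by transferring the unconstrained proposition's uniqueness-plus-necessity argument, which is at the same level of rigor as the paper, and could be made fully airtight by a completion-of-squares or dynamic-programming argument if desired.
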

\begin{proof}
From~\cref{thm:1}, we conclude that $\Lambda$ is the set of all KKT points. Therefore, there exists at least one $(\lambda^*,S_k^{\lambda^*}  ,F_k^{\lambda^*} ,P_k^{\lambda^*} )_{k = 0}^{N - 1} \in \Lambda$ such that the corresponding $\{(S_k^\lambda,F_k^\lambda)\}_{k=0}^{N-1}$ is an optimal solution of the constrained LQG problem in~\cref{primal-LQR}. From the statement~3) of~\cref{proposition:1}, other elements in $\Lambda$ have the same objective function value $J_p (\{ S_k^\lambda  \} _{k = 0}^{N - 1} )$. Therefore, for all $(\lambda ,S_k^\lambda  ,F_k^\lambda  ,P_k^\lambda  )_{k = 0}^{N - 1}  \in \Lambda$, the corresponding $\{(S_k^\lambda,F_k^\lambda)\}_{k=0}^{N-1}$ is an optimal solution of the constrained LQG problem in~\cref{primal-LQR}. This completes the proof.
\end{proof}

\cref{thm:2} tells us that if we can find a root $\lambda>0$ satisfying $f(\lambda)=0$, then we can find an optimal solution of~\cref{primal-LQR}. Therefore, the problem is reduced to finding a root of $f(\lambda)=0$. Our next goal is to develop a simple algorithm to solve the multi-objective LQG problem.

\subsection{Algorithm}

A natural way is to perform a line search over $\lambda \ge 0$ until $C(\{ S_k^\lambda \} _{k = 0}^{N - 1} ) = \gamma$ holds. For instance, we can gradually increase $\lambda$ from $0$ with a certain step size $\Delta \lambda >0$ until $f(\lambda)=0$ holds. Another way is to perform a bisection line search over a certain interval $\lambda  \in [0,\bar \lambda ],\bar \lambda  > 0$ and find a root $\lambda >0$ satisfying $f(\lambda ) = 0$. In this paper, we adopt the bisection search summarized in~\cref{algo:proposed}. Note that the bisection search is valid because $f$ is monotone in its argument, which is reduced to the monotonicity of $C(\{ S_k^\lambda  \} _{k = 0}^{N - 1} )$. Moreover,~\cref{algo:proposed} can be seen as a primal-dual method because they alternate the primal and dual variables updates to estimate $f$. In~\cref{algo:proposed}, $f$ can be computed using~\cref{algo:feval}.
\begin{algorithm}[h]
%=====================================================================================
\caption{Primal-Dual Method with Bisection Line Search}
\begin{algorithmic}[1]
%=====================================================================================
\State Input: accuracy $\varepsilon >0$, line search interval $\tilde \lambda>0$
\State Compute $f(0) = {\rm{Eval}}(0)$.
\If{$f(0)\le 0$}
\State Stop and output $0$.
\EndIf

\State Let $a = 0$ and $b = \tilde\lambda$.
\For{$k \in \{0,1,\ldots\}$}
\State Compute $f(a) = {\rm{Eval}}(a)$, $f(b) = {\rm{Eval}}(b)$, $c = (a+b)/2$, and $f(c) = {\rm{Eval}}(c)$

\If{$|(b-a)/2| \le \varepsilon$}
\State Stop and output $c$
\EndIf

\If{${\rm{sign}}(f(c)) = {\rm{sign}}(f(a))$}
\State $a \leftarrow c$
\Else{}
\State $b \leftarrow c$
\EndIf

\EndFor

\end{algorithmic}\label{algo:proposed}
\end{algorithm}

\begin{algorithm*}[h]
\caption{Policy evaluation $f(\lambda ) = {\rm{Eval}}(\lambda)$}
\begin{algorithmic}[1]
\State Input: $\lambda$

\State Dual update: Perform the recursion (Riccati equation)
\begin{align*}
A^T X_{k+1}^\lambda A-A^T X_{k+1}^\lambda B(R_k + \lambda \tilde R_k +B^T X_{k+1}^\lambda B)^{-1} B^T X_{k+1}^\lambda A+Q_k + \lambda \tilde Q_k=X_k^\lambda
\end{align*}
for all $k \in \{0,\ldots,N-1\} $ with $X_N^\lambda =Q_f + \lambda \tilde Q_f$.

\State Primal update: Compute the feedback gains
\begin{align*}
F_k=&-(R_k + \lambda \tilde R_k+B^T X_{k+1} B)^{-1} B^T X_{k+1} A,\quad k \in \{0,\ldots,N-1\}
\end{align*}

\State Primal update: Perform the recursion
\begin{align*}
&\Phi (F_k,S_{k-1})=S_k,\quad k \in \{1,\ldots,N-1\},
\end{align*}
with $\begin{bmatrix}
   I_n\\
   F_0\\
\end{bmatrix} (V+zz^T) \begin{bmatrix}
   I_n\\
   F_0\\
\end{bmatrix}^T = S_0$.

\State Compute
\begin{align*}
\tilde J_p (\{ S_k^{\lambda} \} _{k = 0}^{N - 1} ):=& {\bf{Tr}}\left( {\tilde Q_f \left( {\left[ {\begin{array}{*{20}c}
   {A^T }  \\
   {B^T }  \\
\end{array}} \right]^T S_{N - 1}^{\lambda} \left[ {\begin{array}{*{20}c}
   {A^T }  \\
   {B^T }  \\
\end{array}} \right] + W} \right)} \right) + \sum\limits_{k = 0}^{N - 1} {{\bf{Tr}}\left( {\left[ {\begin{array}{*{20}c}
   {\tilde Q_k } & 0  \\
   0 & {\tilde R_k }  \\
\end{array}} \right]S_k^{\lambda} } \right)}
\end{align*}

\State Output: $f(\lambda)=\tilde J_p (\{ S_k^{\lambda} \} _{k = 0}^{N - 1} )-\gamma$

\end{algorithmic}\label{algo:feval}
\end{algorithm*}

\subsection{Suboptimality}

Once an approximate $\lambda$ is found from~\cref{algo:proposed}, the corresponding solution $\{ S_k^\lambda,P_k^\lambda,F_k^\lambda\} _{k = 0}^{N - 1}$ can be easily found. The solution obtained by~\cref{algo:proposed} is $\varepsilon$-accurate in terms of $\lambda$, while it does not guarantee the $\varepsilon$-accuracy in terms of the objective $f(\lambda)$ or other variables $\{ S_k^\lambda,P_k^\lambda,F_k^\lambda\} _{k = 0}^{N - 1}$ induced from the $\varepsilon$-accuracy of $\lambda$, which depend on their sensitivities in $\lambda$. From the structures of $f(\lambda)$ or $\{ S_k^\lambda,P_k^\lambda,F_k^\lambda\} _{k = 0}^{N - 1}$, we can conclude that if $\lambda$ is $\varepsilon$-accurate, then $f(\lambda)$ is $\rho(\varepsilon)$-accurate, i.e. $|f(\lambda)|\le \rho(\varepsilon)$ for some function $\rho$ such that $\rho(\varepsilon) \to 0$ as $\varepsilon \to 0$. The function $\rho$ depends on the system parameters such as $(A,B)$, $Q_f,Q_k,R_k,\tilde Q_f,\tilde Q_k,\tilde R_k,k \ge 0$, and $N$.

Due to the finite precision error in the bisection search, it is hard to satisfy the equality $C(\{ S_k^\lambda \} _{k = 0}^{N - 1} ) = \gamma$ or $f(\lambda)=0$ exactly. Assume that $f(\lambda)$ is $\rho$-accurate, i.e. $|f(\lambda)|\le \rho$. This implies
\[
C(\{ S_k^\lambda  \} _{k = 0}^{N - 1} ) - \gamma = :a \in (-\rho,\rho)
\]
Then, such $\lambda >0$ is the dual variable such that
\[
C(\{ S_k^\lambda  \} _{k = 0}^{N - 1} ) = \gamma  + a
\]
is satisfied, and the corresponding tuple $\{\lambda, S_k^\lambda,P_k^\lambda,F_k^\lambda\} _{k = 0}^{N - 1}$ satisfies the KKT condition with $\gamma \leftarrow \gamma  + a$. We can conclude that $\{S_k^\lambda,F_k^\lambda\} _{k = 0}^{N - 1}$ is an optimal solution of~\cref{primal-LQR} with $\gamma$ replaced with $\gamma+ a \in (\gamma- \rho,\gamma + \rho)$.
\begin{proposition}\label{thm:3}
Suppose that given $\lambda >0$, $f(\lambda)$ is $\rho$-accurate, i.e. $|f(\lambda)|\le \rho$, and define
\[
C(\{ S_k^\lambda  \} _{k = 0}^{N - 1} ) - \gamma = :a \in (-\rho,\rho)
\]
Then, for the corresponding tuple $\{\lambda, S_k^\lambda,P_k^\lambda,F_k^\lambda\} _{k = 0}^{N - 1}$, $\{S_k^\lambda,F_k^\lambda\} _{k = 0}^{N - 1}$ is an optimal solution of~\cref{primal-LQR} with $\gamma$ replaced with $\gamma+ a \in (\gamma- \rho,\gamma + \rho)$.
\end{proposition}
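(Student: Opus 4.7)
The plan is to recognize that Proposition \ref{thm:3} is essentially a corollary of Propositions \ref{thm:1} and \ref{thm:2} applied to a perturbed problem, so the argument reduces to checking that the tuple $(\lambda, S_k^\lambda, F_k^\lambda, P_k^\lambda)_{k=0}^{N-1}$ remains a KKT point once $\gamma$ is replaced by $\gamma + a$. The crucial observation is that the construction of $S_k^\lambda$, $F_k^\lambda$, $P_k^\lambda$ via the modified Riccati equation in Proposition \ref{theorem1} depends only on $\lambda$ and on the system/weight data, never on $\gamma$. Hence these iterates are unchanged under a perturbation of $\gamma$.

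Concretely, I would proceed as follows. First, fix $\gamma' := \gamma + a$, where $a \in (-\rho, \rho)$ is as in the statement, and write down the KKT system for Problem \ref{primal-LQR} with $\gamma$ replaced by $\gamma'$. Second, observe that the stationary conditions \eqref{eq:6}--\eqref{eq:9} and the equality primal-feasibility conditions \eqref{eq:10} do not involve $\gamma$ at all; by Proposition \ref{theorem1} the tuple $(S_k^\lambda, F_k^\lambda, P_k^\lambda)_{k=0}^{N-1}$ already satisfies them. Third, check the remaining two KKT conditions for the perturbed problem: the inequality primal feasibility
\[
C(\{S_k^\lambda\}_{k=0}^{N-1}) \le \gamma',
\]
and the complementary slackness $\lambda(C(\{S_k^\lambda\}_{k=0}^{N-1}) - \gamma') = 0$. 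Both follow immediately from the defining identity $C(\{S_k^\lambda\}_{k=0}^{N-1}) = \gamma + a = \gamma'$, which makes the inequality hold with equality. Dual feasibility $\lambda > 0$ is given. Thus $(\lambda, S_k^\lambda, F_k^\lambda, P_k^\lambda)_{k=0}^{N-1}$ belongs to the corresponding set $\Lambda$ (in the sense of Proposition \ref{thm:2}) for the perturbed problem.

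Finally, I would invoke Proposition \ref{thm:2} verbatim for the perturbed problem (with $\gamma$ replaced by $\gamma'$) to conclude that $\{(S_k^\lambda, F_k^\lambda)\}_{k=0}^{N-1}$ is an optimal solution of Problem \ref{primal-LQR} with threshold $\gamma' = \gamma + a \in (\gamma - \rho, \gamma + \rho)$. The only minor point worth flagging is that Proposition \ref{thm:2} implicitly relies on the strict feasibility assumption (\cref{assumption:strict-feasibility}) and on \cref{assumption:1} being transferable to the perturbed problem; since $a$ is an arbitrarily small perturbation and $f$ is continuous by Proposition \ref{proposition:1}, strict feasibility of the perturbed problem is preserved for $\rho$ small enough, and \cref{assumption:1} is needed only to rule out the trivial unconstrained case, which is still ruled out whenever $\lambda > 0$ produces equality in the constraint.

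I do not anticipate a substantive obstacle here: the entire statement is really the observation that the Lagrangian/KKT machinery developed earlier is invariant under shifting $\gamma$ along the zero-level curve of $f$, and Proposition \ref{thm:2} then transports optimality to the perturbed problem. The only thing to be careful about is to argue the perturbed problem still satisfies the hypotheses under which Propositions \ref{thm:1} and \ref{thm:2} were stated, which is a routine continuity argument.
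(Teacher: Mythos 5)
Your proposal is correct and follows essentially the same route as the paper: observe that the tuple $(\lambda, S_k^\lambda, F_k^\lambda, P_k^\lambda)_{k=0}^{N-1}$ satisfies the KKT conditions of the perturbed problem with $\gamma$ replaced by $\gamma+a$ (the construction being independent of $\gamma$, and complementary slackness holding by the definition of $a$), and then invoke \cref{thm:2} for that perturbed problem. Your version is somewhat more careful than the paper's one-line argument, in particular in flagging that the hypotheses of \cref{thm:1,thm:2} must transfer to the perturbed problem, but the underlying idea is identical.
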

\begin{proof}
The corresponding tuple $\{\lambda, S_k^\lambda,P_k^\lambda,F_k^\lambda\} _{k = 0}^{N - 1}$ satisfies the KKT condition with the complement slackness condition replaced with $\lambda (\tilde J_p (\{ S_k \} _{k = 0}^{N - 1} ) - \gamma -a ) = 0$. The proof is concluded using~\cref{thm:2}.
\end{proof}

\cref{thm:3} suggests that the solution obtained by using~\cref{algo:proposed} is a suboptimal solution of~\cref{primal-LQR} with $\gamma$ replaced with $\gamma  + a$.

\subsection{Computational efficiency}
The number of variables in the problem is upper bounded by $O(n^2 \cdot N)$. If we use an SDP to solve the multi-objective problem using interior point algorithms, the time complexity is known to be upper bounded by $O(n^6 \cdot N^3  \cdot \log (1/\varepsilon ))$ to obtain an $\varepsilon$-accurate solution~\cite{gahinet1996lmi}. Therefore, the computational time may explode cubically as $N\to \infty$. On the other hand, the bisection line search is known to find an $\varepsilon$-accurate solution within the number of iterations bounded by $O(\log (\varepsilon _0 /\varepsilon ))$, where $\varepsilon _0  = |b - a|$ is the initial bracket size. The time complexity of the proposed algorithm per iteration is $O(n^2 \cdot N)$. Therefore, the overall time complexity is bounded by $O(n^2 \cdot N \cdot \log (\varepsilon _0 /\varepsilon ))$, which is linear in $N$. Assuming that both notions of the $\varepsilon$-accuracy is reasonably compatible for fair comparisons, the proposed bisection algorithm may perform much faster than the interior-point algorithms especially when $N$ is large, which is the case in most applications. Especially, when the model predictive control is applied, where~\cref{primal-LQR} is solved at every iterations, the proposed scheme could play an important role. The compatibility of $\varepsilon$-accuracy of both approaches is hard to be addressed within the scope of this paper. We will provide numerical comparative analysis at the end of this paper to demonstrate the efficiency of the algorithm.

\subsection{Deterministic cases}

The previous results assume that the noises are stochastic and the covariance matrices $V$ and $W$ are positive definite. However, it does not cover important applications where some variables are deterministic or fixed. To cover more practical cases, we will extend the results to the generic case that some elements of the noise vectors are deterministic. In this case, $V \succ 0,W \succ 0$ should be relaxed to $V \succeq 0,W \succeq 0$. Note that if $V \succ 0,W \succ 0$, then the KKT point in~\cref{theorem1} uniquely satisfies the KKT condition for any fixed $\lambda >0$. For the deterministic case $V = W = 0$ or $V \succeq, W \succeq 0$, it still satisfies the KKT condition, while it may not be a unique solution. Therefore, we can not preserve the optimality arguments in~\cref{thm:2}. Fortunately, the previous results hold in this case under mild assumptions.
\begin{proposition}
Suppose that the second condition, $V \succ 0,W \succ 0$, in~\cref{assumption:2} is relaxed to $V \succeq 0,W \succeq 0$. Assume that $f$ is a bijection for the given $V \succeq 0,W \succeq 0$. Consider any tuples $(\lambda ,S_k^\lambda  ,F_k^\lambda  ,P_k^\lambda  )_{k = 0}^{N - 1}$ such that $\lambda>0$ and $C(\{ S_k^\lambda \} _{k = 0}^{N - 1} ) = \gamma$. Then, the corresponding $\{(S_k^\lambda,F_k^\lambda)\}_{k=0}^{N-1}$ is an optimal solution of the constrained LQG problem in~\cref{primal-LQR}.
\end{proposition}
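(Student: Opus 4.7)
My plan is to replace the KKT-based argument in the proof of~\cref{thm:2}, which relied on~\cref{thm:1}'s identification of the KKT set with the Riccati family $\Lambda$, by a direct Lagrangian comparison; this is necessary because in the semidefinite case~\cref{thm:1} can fail, so $\Lambda$ may be a strict subset of the KKT set and the transfer-of-optimality step through statement~3) of~\cref{proposition:1} no longer applies. The pivotal observation is that the feedback $\{F_k^{\lambda^*}\}$ produced by the modified Riccati equation~\eqref{Riccati} is, by construction, the optimal state-feedback law for the \emph{unconstrained} finite-horizon LQG problem with stage weights $Q_k + \lambda^* \tilde Q_k$, $R_k + \lambda^* \tilde R_k$ and terminal weight $Q_f + \lambda^* \tilde Q_f$. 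This is the classical dynamic programming fact, valid for $V \succeq 0$, $W \succeq 0$ as long as $R_k + \lambda^* \tilde R_k \succ 0$, and the latter is guaranteed by~\cref{assumption:2}.

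Concretely, I take $(\lambda^*, S_k^{\lambda^*}, F_k^{\lambda^*}, P_k^{\lambda^*})$ as the tuple given in the statement, which is well defined and unique because $f$ is assumed to be a bijection. For any $\{(S_k, F_k)\}_{k=0}^{N-1}$ satisfying the dynamics equalities of~\cref{primal-LQR} together with $C(\{S_k\}) \le \gamma$, the Riccati-optimality of $\{F_k^{\lambda^*}\}$ for the modified LQG yields
\begin{align*}
J_p(\{S_k^{\lambda^*}\}) + \lambda^* C(\{S_k^{\lambda^*}\}) \le J_p(\{S_k\}) + \lambda^* C(\{S_k\}).
\end{align*}
Substituting $C(\{S_k^{\lambda^*}\}) = \gamma$ and using $\lambda^* > 0$ together with primal feasibility $C(\{S_k\}) \le \gamma$ then gives
\begin{align*}
J_p(\{S_k^{\lambda^*}\}) \le J_p(\{S_k\}) + \lambda^* \bigl(C(\{S_k\}) - \gamma\bigr) \le J_p(\{S_k\}),
\end{align*}
so $\{(S_k^{\lambda^*}, F_k^{\lambda^*})\}_{k=0}^{N-1}$ is optimal for~\cref{primal-LQR}.

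The main obstacle is conceptual rather than computational: when $V$ or $W$ has a nontrivial kernel, the stationary conditions~\eqref{eq:8}--\eqref{eq:9} no longer force $P_{k,12}^T + P_{k,22} F_k = 0$, so KKT points outside the Riccati family may exist, and the original argument's reliance on~\cref{thm:1} breaks down. The Lagrangian comparison above sidesteps this entirely by invoking only the Riccati-optimality of $\{F_k^{\lambda^*}\}$ for an unconstrained modified LQG, a constructive dynamic programming fact independent of the invertibility of $V$ and $W$. The bijection hypothesis on $f$ plays the complementary role of supplying existence and uniqueness of the dual variable $\lambda^*$: monotonicity and continuity from~\cref{proposition:1} still carry over to $V, W \succeq 0$, but strict monotonicity, which pins down a single root of $f$, is precisely what the bijection supplies.
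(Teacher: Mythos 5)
Your proof is correct, but it takes a genuinely different route from the paper's. The paper proves this proposition by a perturbation/continuity argument: it shows that the KKT tuple and the root map $T(V,W)=(f^{V,W})^{-1}(0)$ are continuous in $(V,W)$ (this is where the bijection hypothesis is used), approximates the degenerate covariances by $V_i=V+(0.5)^i I\succ 0$, $W_i=W+(0.5)^i I\succ 0$ for which \cref{thm:2} already gives optimality, and passes to the limit. Your argument instead is a direct Lagrangian-sufficiency (Everett-type) comparison: the Riccati gains $\{F_k^{\lambda^*}\}$ minimize the augmented objective $J_p+\lambda^* C$ over all feasible $(S_k,F_k)$ --- the same fact the paper already invokes in \eqref{eq2} for the objective \eqref{eq:4}, and one that holds by dynamic programming for any $V\succeq 0$, $W\succeq 0$ since \cref{assumption:2} keeps $R_k+\lambda\tilde R_k+B^TX_{k+1}^\lambda B\succ 0$ --- and then the active constraint $C(\{S_k^{\lambda^*}\})=\gamma$ together with $\lambda^*>0$ and $C(\{S_k\})\le\gamma$ collapses the comparison to $J_p(\{S_k^{\lambda^*}\})\le J_p(\{S_k\})$. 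Your route is more elementary and self-contained: it avoids the continuity of the root map and the limiting argument (whose final step, that a limit of optimizers is an optimizer, the paper leaves implicit), and in fact it never uses the bijection hypothesis for the optimality claim itself, only for existence and uniqueness of $\lambda^*$, which the proposition already posits; so you actually prove a slightly stronger statement. What the paper's approach buys in exchange is an explicit link between the degenerate and nondegenerate problems, exhibiting the solution as a limit of the unique KKT points of strictly feasible perturbed problems. One small point worth making explicit in your write-up: the Riccati-optimality of $\{F_k^{\lambda^*}\}$ for the modified unconstrained LQG relies on $X_{k+1}^{\lambda^*}\succeq 0$, which follows by backward induction on \eqref{Riccati} from $Q_f+\lambda^*\tilde Q_f\succeq 0$ and $Q_k+\lambda^*\tilde Q_k\succeq 0$ in \cref{assumption:2}.
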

\begin{proof}
We first prove the continuity of $(\lambda ,S_k^\lambda  ,F_k^\lambda  ,P_k^\lambda  )_{k = 0}^{N - 1}$ as a function of $V$ and $W$, and denote them by $(\lambda,S_k^{\lambda,V,W},F_k^{\lambda,V,W},P_k^{\lambda,V,W})_{k = 0}^{N - 1}$. First of all, suppose that $\lambda >0$ is fixed. Then, $F_k^{\lambda,V,W}$ and $P_k^{\lambda,V,W}$ do not depend on $V$ and $W$, and hence are continuous as functions of $(V,W)$. Moreover, $S_k^{\lambda,V,W}$ depends on $(V,W)$ linearly, and thus, is continuous in $(V,W)$. Similarly, so are $J_p^{V,W}$ and $C^{V,W}$ as functions of $(V,W)$. Now, $f^{V,W}$ is also continuous as a function of $(V,W)$ and $\lambda$. Consider the set-valued mapping $T:(V,W) \mapsto \{ \lambda  > 0:f^{V,W}(\lambda) = 0\}$. If $f$ is bijective for $V \succeq 0$ and $W \succeq 0$, then the output of $T$ is singleton, and $T(V,W)$ is the point on the graph of $f^{V,W}$ which crosses zero because the Lagrange multiplier $\lambda^* > 0$ which solves the KKT condition is a root of $f^{V,W}(\lambda)$. Therefore, from the continuity of $f^{V,W}$ on $(V,W)$ and $\lambda \geq 0$, we can prove that $T: {\mathbb R}^{n \times n} \times {\mathbb R}^{m \times m} \to {\mathbb R}_{++}$ is also continuous as follows. First of all, note that by the continuity of $f^{V,W}$ on $(V,W)$, $f^{V,W}$ is a bijection for all $(V',W')$ around $(V,W)$. In the sequel, assume that $(V',W')$ always lies inside such a set. Note also that $T(V,W) = (f^{V,W})^{-1}(0)$ and $(f^{V,W})^{-1}(y)$ is continuous in $y$ by the continuity of $f^{V,W}(\lambda)$ in $\lambda$. We will show that for any $\varepsilon >0$, there exists $\delta >0$ such that $\left\| {V - V'} \right\|_2  + \left\| {W - W'} \right\|_2  < \delta$ implies $|T(V,W) - T(V',W')| < \varepsilon$. To proceed, let us define $T(V,W) = (f^{V,W})^{-1}(0) = \lambda^*$ and $T(V',W') = (f^{V',W'})^{-1} (0) = \lambda'$. By the continuity of $f^{V,W}$ in $V$ and $W$, for any $\varepsilon' >0$, there exists $\delta >0$ such that $\left\| {V - V'} \right\|_2  + \left\| {W - W'} \right\|_2  < \delta$ implies $|f^{V,W} (\lambda ^* ) - f^{V',W'} (\lambda ^* )| < \varepsilon'$. Moreover, by the continuity of $(f^{V,W})^{ - 1}(y)$ in $y$, for any $\varepsilon >0$, there exists $\varepsilon ' >0$ such that $|x - y| < \varepsilon '$ implies $|(f^{V',W'})^{ - 1} (x) - (f^{V',W'})^{ - 1} (y)| < \varepsilon$.
With $x = f^{V,W} (\lambda ^* ),y = f^{V',W'} (\lambda ^* )$, we have
\begin{align*}
&|(f^{V',W'})^{ - 1} (f^{V,W} (\lambda ^* )) - (f^{V',W'})^{ - 1} (f^{V',W'} (\lambda ^* ))|\\
 =& |(f^{V',W'})^{ - 1} (0) - \lambda ^* |\\
 =& |(f^{V',W'})^{ - 1} (0) - (f^{V,W})^{ - 1} (0)|\\
  <& \varepsilon
\end{align*}
Therefore, this proves the continuity of $T(V,W) = (f^{V,W})^{-1}(0)$ in $V$ and $W$. Hence, $(\lambda,S_k^{\lambda,V,W}  ,F_k^{\lambda,V,W}  ,P_k^{\lambda,V,W})_{k = 0}^{N - 1}$ is continuous as a function of $V$ and $W$. Note that $\lambda$ is also a function of $(V,W)$. As the next step, consider a sequence $(V_i ,W_i )_{i = 0}^\infty$ such that $V_i  = V + (0.5)^i I$ and $W_i  = W + (0.5)^i I$ so that $(V_i,W_i) \to (V,W)$ as $i \to \infty$ and $V_i \succ 0,W_i\succ 0$ for all $i \ge 0$. Then, the corresponding $(S_k^{\lambda_i,V_i,W_i } ,F_k^{\lambda_i,V_i,W_i })_{k = 0}^{N - 1}$ is the unique optimal solution corresponding to $V_i$ and $W_i$, where $\lambda_i$ is the Lagrange multiplier corresponding to $V_i$ and $W_i$. From the continuity of the KKT point in $(V,W)$, we can arrive at the desired conclusion.
\end{proof}

\subsection{Example}
In this section, we will provide a simple example to illustrate the validity and efficiency of the proposed approach. Let us consider~\cref{ex1} again with the following setting:
\[
N = 1000,\quad \gamma  = 25000,\quad V = 0,\quad W = 0.01I,\quad z = \left[ {\begin{array}{*{20}c}
   {25}  \\
   {25}  \\
   {30}  \\
   {24}  \\
\end{array}} \right]
\]

Running the proposed algorithm with the initial search interval $[0, \tilde \lambda] = [0, 100]$ and the accuracy $\varepsilon = 0.000001$ leads to $\lambda^* = 0.2448$ with the elapsed time $1.83$ seconds. With the obtained control policy, the evolution of the indoor temperature $x_1(t)$, reference temperature $x_4(t)$, input $u(t)$, and the histograms of power of the objective function and constraint cost are depicted in~\cref{fig:2}. The histogram has been obtained over $3000$ samples, and the empirical average of the constraint cost is $25129$, which meets the inequality constraint approximately.
\begin{figure*}[t]
%=============================================================================================================
\centering\includegraphics[width=14cm,height=9cm]{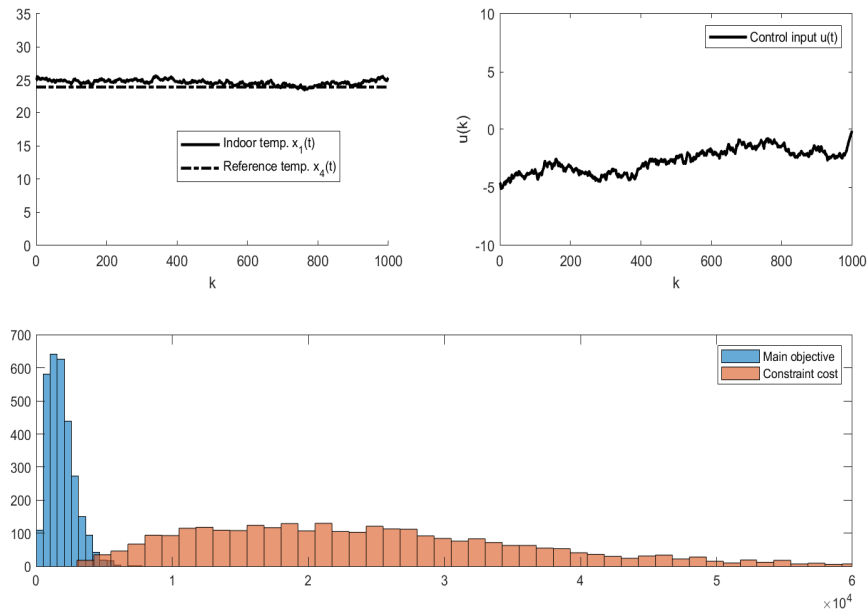}\caption{Evolution of the indoor temperature $x_1(t)$, reference temperature $x_4(t)$, and the input $u(t)$}\label{fig:2}
%=============================================================================================================
\end{figure*}

Running the proposed algorithm with the same setting except for $\gamma  = 10000$ leads to $\lambda^* = 0.8959$. The corresponding simulation results are given in~\cref{fig:3}. The empirical average of the constraint cost from samples in histogram is $9956$, which meets the inequality constraint approximately..
\begin{figure*}[t]
%=============================================================================================================
\centering\includegraphics[width=15cm,height=9cm]{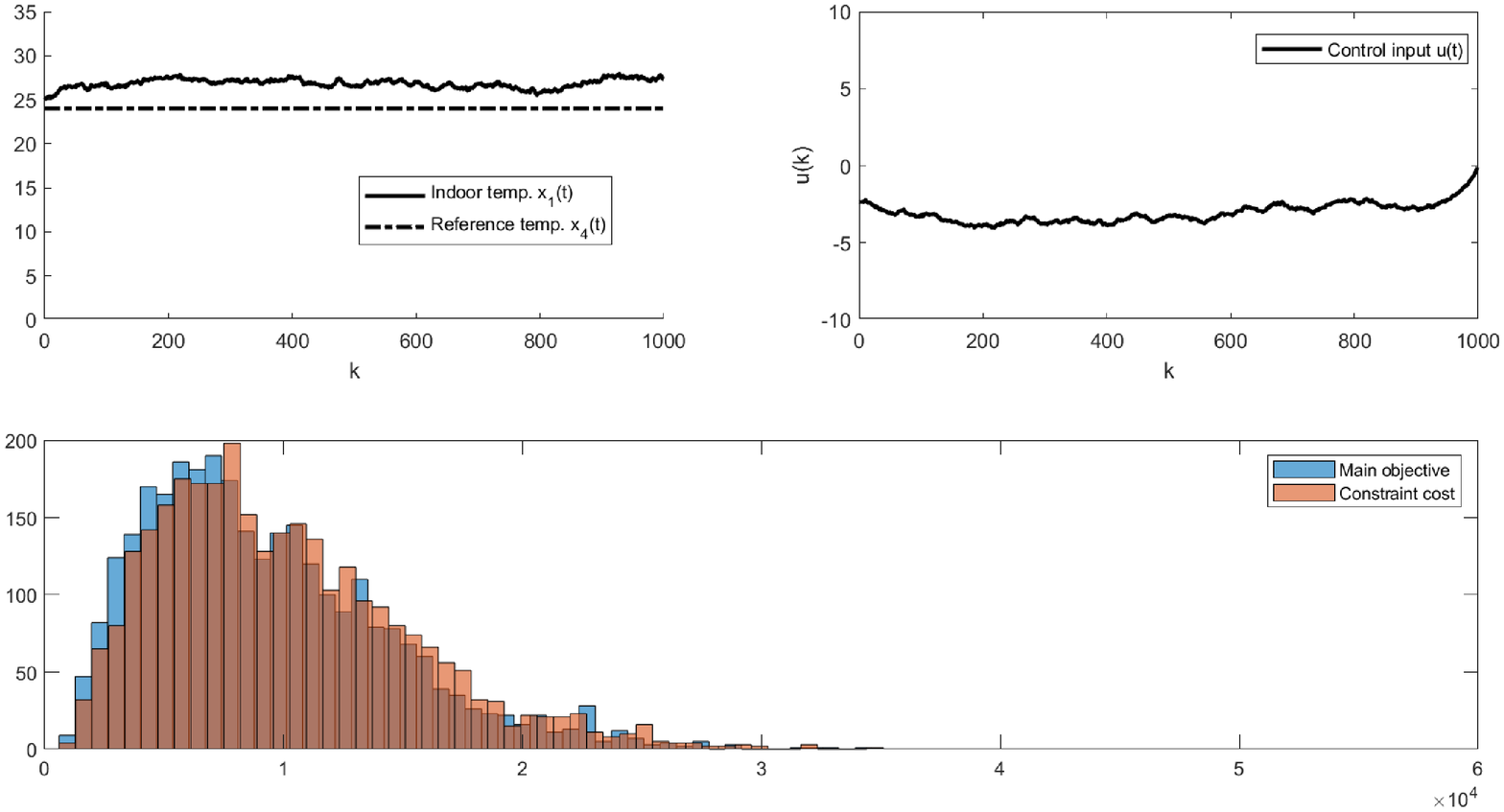}\caption{Evolution of the indoor temperature $x_1(t)$, reference temperature $x_4(t)$, and the input $u(t)$}\label{fig:3}
%=============================================================================================================
\end{figure*}

A semidefinite programming problem (SDP) for solving the same problem~\cref{primal-LQR} is given by
\begin{align*}
&\max _{S_k ,k \in \{ 0,1, \ldots ,N\} ,\lambda  \ge 0} \sum\limits_{k = 0}^N {{\rm{Tr}}S_k }  - \lambda \gamma\\
&{\rm{subject}}\,\,{\rm{to}}\\
&\left[ {\begin{array}{*{20}c}
   {Q_k  + \lambda \tilde Q_k } & 0  \\
   0 & {R_k  + \lambda \tilde R_k }  \\
\end{array}} \right] + \left[ {\begin{array}{*{20}c}
   {A^T S_{k + 1} A - S_k } & {A^T S_{k + 1} B}  \\
   {B^T S_{k + 1} A} & {B^T S_{k + 1} B}  \\
\end{array}} \right] \ge 0,\quad k \in \{ 0,1, \ldots ,N\}
\end{align*}
with $S_{N + 1}  = 0,Q_N  = Q_f ,\tilde Q_N  = \tilde Q_f ,R_N  = \tilde R_N  = 0$, which can be readily obtained by modifying the results in~\cite{gattami2010generalized}. The histograms of the elapsed times of the proposed algorithm and the above SDP problem to solve the building problem are shown in over $30$ samples.
\begin{figure*}[t]
%=============================================================================================================
\centering\includegraphics[width=15cm,height=6cm]{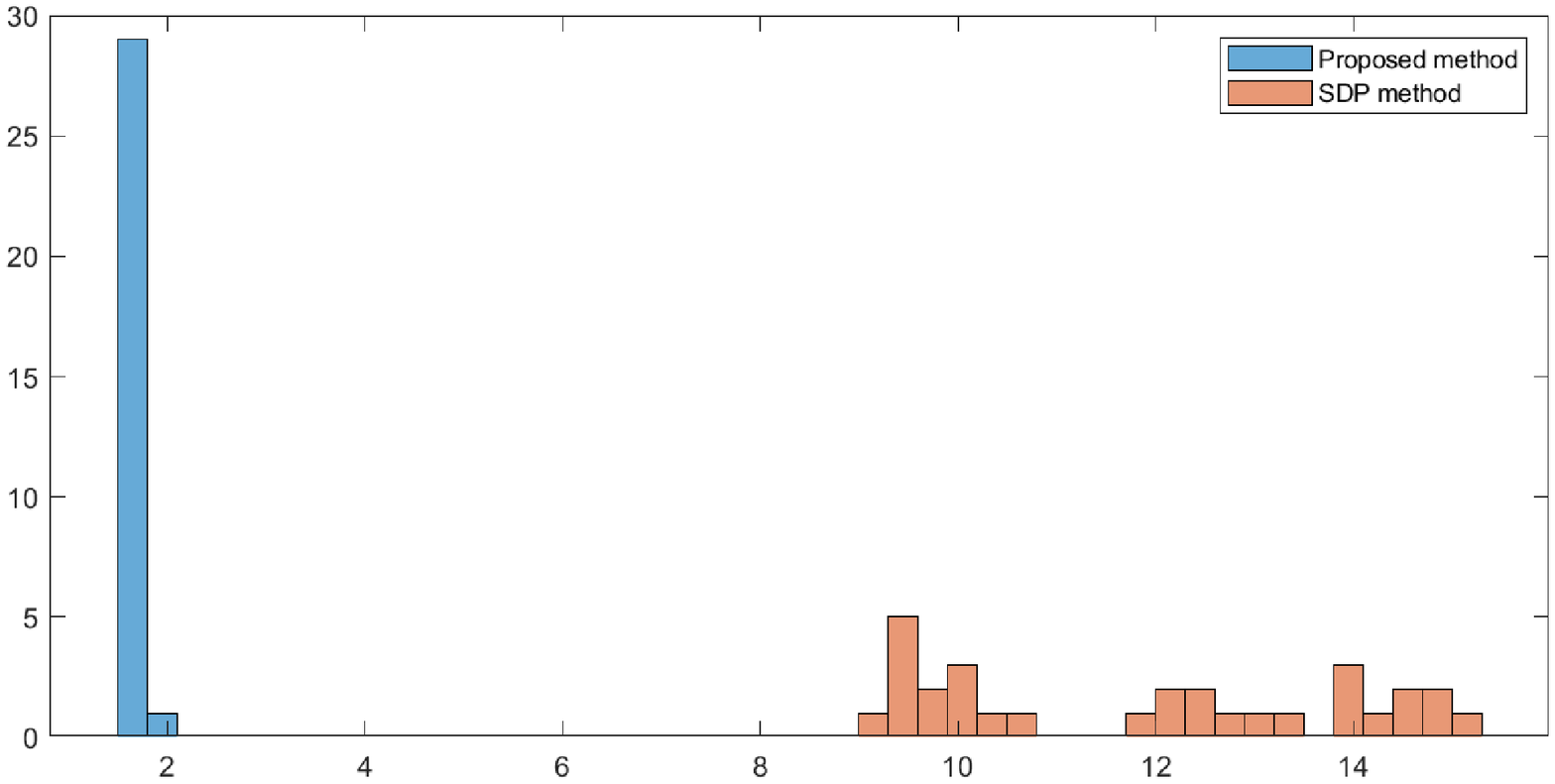}\caption{Evolution of the indoor temperature $x_1(t)$, reference temperature $x_4(t)$, and the input $u(t)$}\label{fig:4}
%=============================================================================================================
\end{figure*}
The average elapsed times are $1.7551$ seconds and $11.8987$ seconds for the proposed method and the SDP problem, respectively. To solve the SDP, we used SeDuMi~\cite{sturm1999using} and Yalmip~\cite{lofberg2004yalmip}. The result demonstrates that the proposed algorithm is computationally more efficient than the SDP approach.

\section{Conclusion}
In this paper, we have considered a multi-objective LQG with an input energy constraint.
An efficient bisection line search algorithm has been proposed based on optimization and Lagrangian theories.
We have rigorously analyzed optimal solutions to the underlying problem based on the KKT condition, and proved the convergence guarantees of the algorithm. It has been applied to a building control problem to demonstrate its validity and efficiency.
We expect that this work can be applied to fast model predictive control and offers new insights on LQG problems as well.
A potential future work is to consider multiple constraints for which the bisection line search becomes inefficient due to the multi-dimensional search space.

\bibliographystyle{IEEEtran}
\bibliography{reference}

\appendices

%\begin{center}
%{\Large Appendix}
%\end{center}

\end{document}